\documentclass[12pt]{amsart}
\usepackage{amsthm,amssymb,mathrsfs,setspace,amsmath, bbm}
\usepackage{a4wide}
\usepackage{hyphenat}
\usepackage{import}
\usepackage{subfiles}
\usepackage[usenames,dvipsnames]{color}
\usepackage{graphicx}
\usepackage{subcaption}
\usepackage[export]{adjustbox}
\usepackage{xcolor}
\usepackage{hyperref}
\hypersetup{
    colorlinks=true,
    linkcolor=blue,
    filecolor=magenta,      
    urlcolor=cyan,
    citecolor=violet
    }

\newtheorem{theorem}{Theorem}[section]
\newtheorem{lemma}{Lemma}[section]

\newtheorem{definition}{Definition}[section]

\title[Negative Lyapunov Exponent of Circle Maps]{Negative Lyapunov Exponent of Circle Maps Forced by Expanding Circle Endomorphisms}
%\author{Kristian Bjerkl\"{o}v \footnote{bjerklov@kth.se} \ \ and \ \ Kirthana Rajasekar \footnote{rajasek@kth.se}}
%\affil{Department of Mathematics, Royal Insititute of Technology, 100 44 Stockholm, Sweden.}
\author{Kirthana Rajasekar}
\address{Department of Mathematics, KTH Royal Insititute of Technology, 100 44 Stockholm, Sweden.}
\email{rajasek@kth.se}
\begin{document}

\begin{abstract} 
We study maps on the torus $\mathbb{T}^2$ that are of the form $F(x,y) = (bx, f_x(y))$, where $b\geq 2$ is an integer. We establish an open class of $C^1$-maps, with $f_x(y)$ that are typically non-monotonic in $x$, for which the Lyapunov exponents on the fibre are negative almost everywhere. For each fixed $f_x(y)$ and a base map $bx$ that is sufficiently expanding, we establish a uniform upper bound for the Lyapunov exponents; moreover, the uniform bound depends on selective characteristics of $f$. This implies that orbits on the same fibre exhibit local synchronisation. 
\end{abstract}
\maketitle
%------------------------------------------------------------- INTRODUCTION ------------------------------------------------------
\section{Introduction}
We study skew-product maps on the torus $\mathbb{T}^2$, where $\mathbb{T}:= \mathbb{R}/\mathbb{Z}$, with expanding base maps of the form $bx \mod 1$, where $b\geq 2$ is an integer. More specifically, we study maps $F: \mathbb{T}^2 \rightarrow \mathbb{T}^2$ given by 
\begin{equation}\label{map}
    F(x,y) = \left(bx, f(x,y) \right),
\end{equation}
where $f: \mathbb{T}^2 \rightarrow \mathbb{T}$, also denoted by $f_x(y)$, is chosen from a certain open class of functions which are $C^1$ in $x$ and $y$. For each $n\in \mathbb{N}$, the $n$th iterate of the map $F$ is denoted by 
    $$F^n(x,y) = (b^n x, f^n_{x}(y)) = (b^n x, f_{b^{n-1}x}\circ \cdots \circ f_x(y)).$$
The orbit of a point $(x, y) \in \mathbb{T}^2$ under $F$ is denoted by $((x_j, y_j))_{j\geq 0}$, where
    \begin{align*}
        (x_j, y_j) & := F^j(x,y).
    \end{align*}
    
Let us define the fibered Lyapunov exponent at a point $(x,y)$ to be 
    \begin{align*}
        L(x,y) 
        & := \limsup_{n\rightarrow \infty} \ \frac{1}{n}\log \Big| \frac{\partial y_n}{\partial y} \Big| \\
        & \ = \limsup_{n\rightarrow \infty} \ \frac{1}{n} \sum_{j=0}^{n-1}  \log \Big|\partial_{y} f_{x_j}(y_j) \Big|. 
        \stepcounter{equation}\tag{\theequation} \label{lyapunov}
    \end{align*}
The Lyapunov exponent $L(x,y)$ thus captures the stability of the orbit of $(x,y)$, along the direction of the fibre. We are interested in finding certain conditions on $b$ and $f$ for which we have $L(x,y)<0$ (almost everywhere). In Theorem \ref{theorem}, we prove that for each fixed $f$ that satisfies the appropriate conditions the Lyapunov exponent $L(x,y)$ has a uniform negative bound, for a.e. $(x,y)$. Let us now briefly discuss the context of this question and some of the previous results.

The classic result of Oseledets' multiplicative ergodic theorem proves that, for a well behaved linear cocycle $F(x,y)$, there exists a filtration of the tangent space into subspaces such that each subspace has a distinct Lyapunov exponent. There has been a continued interest to sharply estimate Lyapunov exponents for classic systems such as the H\'{e}non map and Schr\"{o}dinger cocycles. Since the exponents capture asymptotic expansion or contraction in the tangent space, the systems that have non-zero Lyapunov exponents almost everywhere exhibit a weaker form of hyperbolicity called the non-uniform hyperbolicity. In \cite{Young:1993}, L.S.Young established an open class of $SL(2,\mathbb{R})$-cocycles that have a positive Lyapunov exponent. Various techniques have since been established to approach this question for different cocycles and we refer to \cite{Barreira:2006} for an overview. For a dynamical system with negative Lyapunov exponents, as a natural extension, one can investigate the existence of attractors, SRB measures and their corresponding regularity. In \cite{Tanzi_Young:2020}, Tanzi and Young have proved non-uniform hyperbolicity results and the existence of SRB measures for certain skew-product maps on $\mathbb{T}^2 \times \mathbb{T}$. We also refer to their insightful discussion on systems with different types of coupling of hyperbolic maps and diffeomorphisms on the circle, and systems with attractors that have wild geometric structure. Let us now discuss a few results that are closer to our setup. 

In \cite{Homburg:2014}, Homburg uses a numerical experiment on the map 
$$F(x,y)=\left(3x, x+y+\frac{1}{8}\sin(2\pi y)\right)$$
to demonstrate the existence of synchronisation in certain skew-product maps by showing a fast global contraction on its fibres. The main result of \cite{Homburg:2014} proves the existence of an open class of circle diffeomorphisms forced by expanding base maps for which almost every fibre exhibits a global contraction and the maps are topologically mixing. In Theorem \ref{theorem}, we establish an open class of functions such that for a sufficiently expanding base map, there is local contraction on each fibre; moreover, the rate of this contraction is uniformly bounded by a constant that depends on selective characteristics of $f$. 

The method used in this paper, of using a strongly expanding base map to study dynamics of non-monotonic functions $f(x,y)$, is motivated from \cite{Bjerklov:2018} and \cite{Bjerklov:2020} by Bjerkl\"{o}v.  In \cite{Bjerklov:2018}, the main result is the global synchronisation of orbits, for skew-product maps with functions $f(x,y)$ which are monotonous in $x$ and $y$. For the functions $f(x,y)$ considered, there exists a region in $\mathbb{T}^2$ where there is strong contraction on the fibre, and another region with strong expansion. These techniques were further inspired by \cite{Viana:1997}, where Viana introduced the concept of `admissible curves' to establish an open class of maps that have multiple positive Lyapunov exponents, which then has implications on the stability of its attractors. A similar construction can also be found in \cite{Tsujii:2001}, where Tsujii proved the existence of absolutely continuous SRB measures for certain skew-product maps on $\mathbb{T} \times \mathbb{R}$. A similar notion of admissible curves has been used by Castorrini and Liverani in \cite{Castorrini_Liverani:2020}, for a class of partially hyperbolic systems that were proved to have finitely many SRB measures. It is worth noting that various techniques have been developed to analyse non-uniform hyperbolicity by using monotonicity conditions imposed on the systems. In \cite{Bjerklov:2022},  Bjerkl\"{o}v obtained precise estimates of the Lyapunov exponents for a specific class of functions $f(x,y) = x+g(y)$, where $g$ is an orientation preserving $C^2$ diffeomorphism. These results were further generalised for a broader class of monotonic functions, by Bjerkl\"{o}v and Krikorian, in \cite{Bjerklov_Krikorian:2024}. Some more relevant results will be discussed in Subsections \ref{remarks} and \ref{examples}, in the context of Theorem \ref{theorem}.

%------------------ FIGURE 1 -----------------------

\begin{figure}
   \includegraphics[width=1.02\linewidth, center]{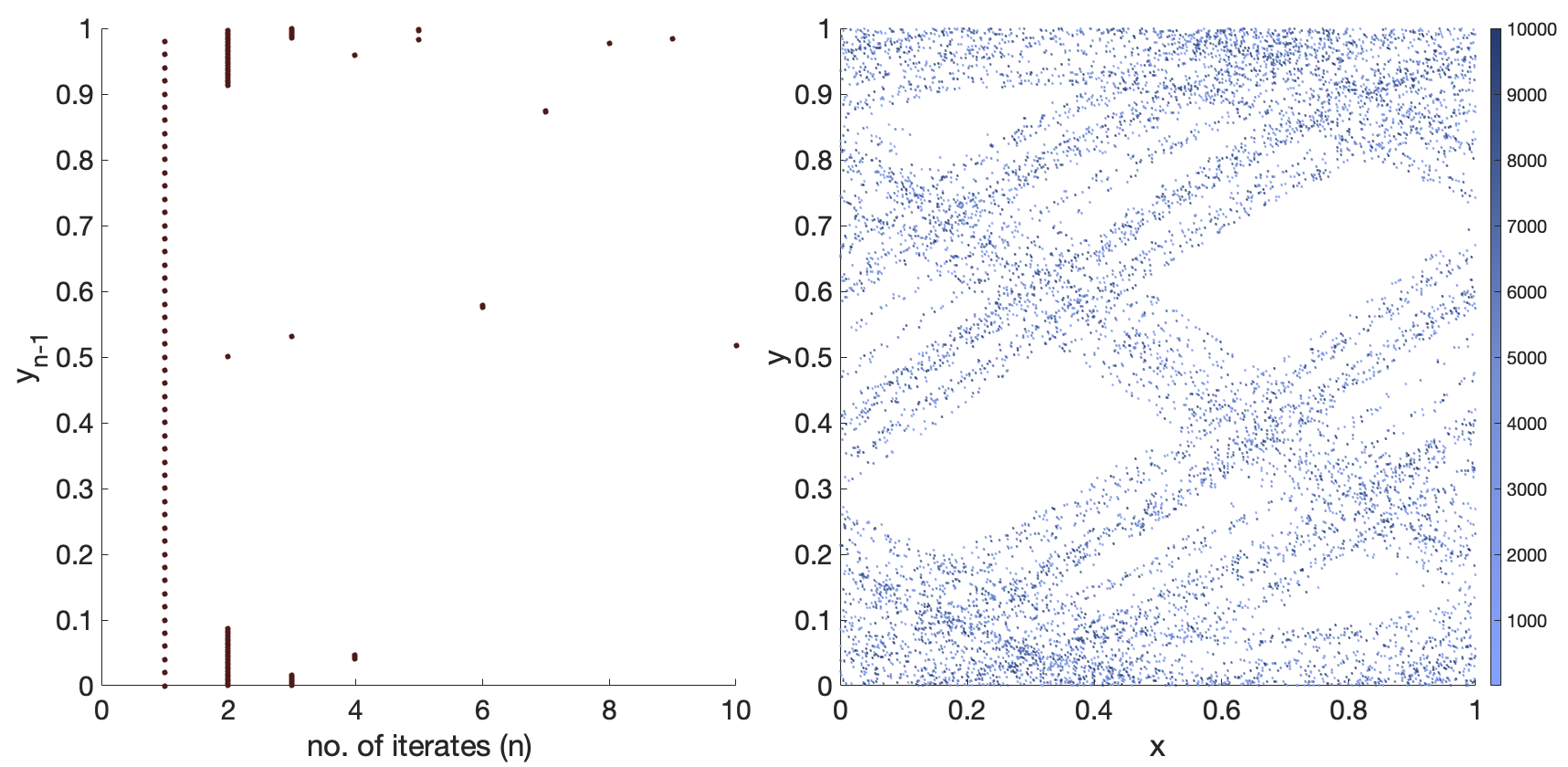}
\caption{The left panel shows 50 distinct points on the fibre $x=\frac{1}{350 \pi}$ and their iterates under the map $F_{1/6000}$, defined in Example 1 of Subsection \ref{examples}, which is of the form $(x,y)\mapsto (7x, g(x)+h_{1/6000}(y))$.
The right panel shows the plot of one hundred thousand points in the orbit of the point $(\frac{1}{350 \pi}, \frac{3}{50})$ with respect to the map $F_{1/6000}$.}
\label{graph 1}
\end{figure}

%\begin{figure}[!tbp]
%  \centering
%  \subfloat{\includegraphics[width=0.455\textwidth]{2025_08_20_experiment1.png}\label{fig:f1}}
%  \hfill
%  \subfloat{\includegraphics[width=0.5\textwidth]{2025_08_20_experiment2.png}\label{fig:f2}}
%  \caption{The left panel shows 50 distinct points on the fibre $x=\frac{1}{350 \pi}$ and their iterates under the map $F_{1/6000}$, defined in Example 1 of Subsection \ref{examples}, which is of the form $(x,y)\mapsto (7x, g(x)+h_{1/6000}(y))$.
%The right panel shows the plot of one hundred thousand points in the orbit of the point $(\frac{1}{350 \pi}, \frac{3}{50})$ with respect to the map $F_{1/6000}$. }
%\label{graph 1}
%\end{figure}

%---------------------------------------------------------

\subsection{Statement of the theorem}

\begin{definition}\label{definition f}
Let $\mathcal{F}(\mathbb{T}^2)$ denote the class of $C^1$ functions $f:\mathbb{T}^2 \rightarrow \mathbb{T}$ with $\|f\|_{C^1} \geq 1$ and that satisfy the following conditions:

\begin{enumerate}
    \item[(a)]\label{a} There exists a set $G \subset \mathbb{T}$ consisting of a finite union of open intervals and a constant $0<C<1$ such that for all $(x,y) \in \mathbb{T}\times G$ we have $|\partial_{y}f_x(y)| < C < 1.$
    \item[(b)]\label{b} There exists an integer $s \geq 1$ and a constant $0 < \varepsilon < \frac{\log (1/C)}{6\ \log (\|f\|_{C^1}/C) }$ such that for any $y \in \mathbb{T}$ the set $\{x\in \mathbb{T}:\ f_x(y) \notin G\}$ consists of at most $s$ intervals and has measure at most $\varepsilon$.
\end{enumerate}
\end{definition}

Note that $\mathcal{F}(\mathbb{T}^2)$ is open in the $C^1$ topology. In the above definition, the lack of any monotonicity condition in $x$ is a major distinction between the result in Theorem \ref{definition f} and the previous results in \cite{Bjerklov:2018},\cite{Bjerklov:2022}, and \cite{Young:1993}.

\begin{theorem}\label{theorem}
For each $f \in \mathcal{F}(\mathbb{T}^2)$ there exists $b_0 = b_0(\|f\|_{C^1},C,s,\varepsilon) > 2$ such that for all $b > b_0$ its corresponding map $F$, as defined in equation (\ref{map}), has the following estimate:
$$L(x,y) < \frac{\log C}{2}, \mbox{ for a.e.  $(x,y) \in \mathbb{T}^2$}.$$
\end{theorem}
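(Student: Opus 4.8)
The plan is to bound the Birkhoff average appearing in (\ref{lyapunov}) by classifying each instant $j$ according to whether the orbit point $y_j$ lies in a slightly fattened copy of the good set $G$, and then to control the frequency of the remaining instants using the strong expansion of the base. Fix $C'$ with $C<C'<\sqrt C$ and, by uniform continuity of $\partial_y f$ on $\mathbb T^2$, a radius $\rho>0$ so small that $|\partial_y f_x(y)|<C'$ whenever $\operatorname{dist}(y,\overline G)\le 2\rho$; put $\hat G:=G_{+2\rho}$, still a finite union of open intervals, with $|\partial_y f|<C'$ on $\mathbb T\times\hat G$. Shrinking $C'$ towards $C$ we may further arrange that $\beta:=\log(\sqrt C/C')\big/\log(\|f\|_{C^1}/C')>\varepsilon$: indeed $\beta\to \log(1/C)\big/\bigl(2\log(\|f\|_{C^1}/C)\bigr)$ as $C'\downarrow C$, and by Definition \ref{definition f}(b) $\varepsilon$ is strictly below $\tfrac13$ of this limit. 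Fix $y$ and write $A_k:=\{x\in\mathbb T:\ y_k(x)\notin\hat G\}$ and $N_n(x):=\#\{0\le j<n:\ x\in A_j\}$. Since $|\partial_y f_{x_j}(y_j)|<C'$ when $y_j\in\hat G$ and $\le\|f\|_{C^1}$ in all cases,
\[
\frac1n\sum_{j=0}^{n-1}\log\bigl|\partial_y f_{x_j}(y_j)\bigr|\ \le\ \log C'+\frac{N_n(x)}{n}\,\log\frac{\|f\|_{C^1}}{C'},
\]
so it suffices to show that, for each fixed $y$ and a.e.\ $x$, $\limsup_n N_n(x)/n\le\varepsilon$.

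Next I would prove a bounded-slope estimate for the fibred graph transform. Set $K:=2\rho/\|f\|_{C^1}$ (take $\rho$ small so $K<1$) and $b_0:=\|f\|_{C^1}(1+1/K)$, which one checks exceeds $2$. A direct computation with (\ref{map}) shows that for every $b>b_0$ the map $F$ carries any $C^1$ curve $\gamma:\mathbb T\to\mathbb T$ with $|\gamma'|\le K$ to $b$ curves of the same type, one over each branch of $x\mapsto bx$, once each image is read in the new base coordinate. Iterating from the constant curve $y_0\equiv y$, it follows by induction that for every $j$ and every $b$-adic interval $I$ of length $b^{-j}$ the graph $x\mapsto y_j(x)$ over $I$, written in the coordinate $x_j=b^jx$ (a bijection $I\to\mathbb T$), is such a curve; in particular $y_j$ oscillates by at most $K/2$ on $I$. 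Choosing $z^*=y_j(x_0)$ with $x_0\in I$, we get $|y_{j+1}(x)-f_{x_j}(z^*)|\le\|f\|_{C^1}\cdot K/2=\rho$, so $f_{x_j}(z^*)\in G$ forces $y_{j+1}(x)\in G_{+\rho}\subset\hat G$; equivalently $y_{j+1}(x)\notin\hat G$ forces $f_{x_j}(z^*)\notin G$. As $x$ runs over $I$ the point $x_j$ runs over all of $\mathbb T$, so Definition \ref{definition f}(b) gives the one-step estimate
\[
|A_{j+1}\cap I|\ \le\ \varepsilon\,|I|\qquad\text{and}\qquad A_{j+1}\cap I\ \text{is a union of at most }s\text{ intervals},
\]
valid for every $b$-adic interval $I$ of length $b^{-j}$.

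Finally I would convert this into an almost-sure bound on $N_n/n$ by a large-deviation argument. Let $\mathcal F_j$ be the algebra generated by the $b$-adic intervals of length $b^{-j}$; the one-step estimate says exactly $\mathbb E[\mathbbm{1}_{A_{j+1}}\mid\mathcal F_j]\le\varepsilon$. As $\mathbbm{1}_{A_{j+1}}$ is not $\mathcal F_{j+1}$-measurable, I would use the ``at most $s$ intervals'' clause to replace it by the larger, $\mathcal F_{j+1+m}$-measurable indicator $\widetilde X_{j+1}$ of the event that the $b$-adic interval of $x$ of length $b^{-(j+1+m)}$ meets $A_{j+1}$; this still satisfies $\mathbb E[\widetilde X_{j+1}\mid\mathcal F_j]\le\varepsilon+2sb^{-m}=:\varepsilon'$, with $\varepsilon'\to\varepsilon$ as $m\to\infty$. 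Splitting $\{0,\dots,n-1\}$ into $m+1$ arithmetic progressions of common difference $m+1$, along each progression the $\widetilde X_j$ form a $\{0,1\}$-valued process adapted to an increasing sub-filtration with one-step conditional means $\le\varepsilon'$, so a Chernoff estimate gives $\mathbb P(\text{partial sum}\ge an/(m+1))\le e^{-cn}$ for any $a>\varepsilon'$, with $c=c(a,\varepsilon')>0$. Since $N_n\le\sum_{j<n}\widetilde X_j$, summing over the $m+1$ progressions gives $\mathbb P(N_n\ge an)\le(m+1)e^{-cn}$, which is summable in $n$; Borel--Cantelli then gives $N_n(x)<an$ for all large $n$ for a.e.\ $x$. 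Letting first $a\downarrow\varepsilon'$ and then $m\to\infty$ yields $\limsup_n N_n(x)/n\le\varepsilon$ for a.e.\ $x$, whence by the first display
\[
L(x,y)\ \le\ \log C'+\varepsilon\,\log\frac{\|f\|_{C^1}}{C'}\ <\ \frac{\log C}{2}
\]
for a.e.\ $x$; since this holds for every fixed $y$ and the right-hand bound is independent of $(x,y)$, Fubini gives $L(x,y)<\tfrac12\log C$ for a.e.\ $(x,y)$, uniformly.

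The step I expect to be the main obstacle is the one-step estimate: Definition \ref{definition f}(b) controls the bad set only when the fibre coordinate is held constant, whereas along the true orbit it is the $x$-dependent value $y_j(x)$ that occurs, and comparing $y_{j+1}(x)$ with $f_{x_j}(z^*)$ costs a displacement of size $\rho$ which, without the enlargement $G\rightsquigarrow G_{+2\rho}$, would leak an uncontrolled boundary-collar term into the estimate. Trading $G$ for $G_{+2\rho}$ at the price of replacing $C$ by $C'$ is precisely what removes this leak, and keeping the constants consistent (so that Definition \ref{definition f}(b) still leaves room after the loss from $C$ to $\sqrt C$) is the delicate point; the non-$\mathcal F_{j+1}$-measurability of $\mathbbm{1}_{A_{j+1}}$ is a lesser nuisance handled by the rounding-and-subsequence device, and showing that $b_0$ depends only on $\|f\|_{C^1},C,s,\varepsilon$ may require a slightly more careful choice of $\hat G$ than the one sketched here.
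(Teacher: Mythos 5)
Your proposal is sound and reaches the stated bound, but it implements the two key technical steps differently from the paper, so a comparison is worthwhile. The shared skeleton is the same: iterate the fibre over the $b$-adic partition, show via a bounded-slope (admissible curve) estimate that over each generation-$j$ interval the curve $x\mapsto y_j(x)$ is nearly constant (your invariant cone $|\gamma'|\le K$ is equivalent to the paper's geometric-series bound $|\phi_{j_1\cdots j_n}'|\le \sum_i R^i/b^i$ in Lemma \ref{derivative}), invoke Definition \ref{definition f}(b) once per generation, and finish with a large-deviations/Borel--Cantelli argument plus good/bad bookkeeping of the Birkhoff sum. Where you diverge: (i) to absorb the oscillation of the curve you fatten $G$ to $\hat G$ and pay in the contraction constant ($C\to C'$, keeping $\varepsilon$), whereas the paper keeps $G$ and $C$ and instead fattens the bad $x$-set uniformly in $y$ (Lemmas \ref{deltay}--\ref{delta}, paying $\varepsilon\to\varepsilon'$ via a compactness argument); both consume exactly the factor-$3$ margin built into the bound $\varepsilon<\frac{\log(1/C)}{6\log(\|f\|_{C^1}/C)}$, and your final arithmetic ($\beta>\varepsilon$, $L\le\log C'+\varepsilon\log(\|f\|_{C^1}/C')<\tfrac12\log C$) checks out. (ii) For the measurability of the bad events you round to generation $j+1+m$ and run a Chernoff bound along $m+1$ arithmetic progressions, while the paper sidesteps this by declaring an interval $I_{j_1\cdots j_{n+1}}$ bad as soon as it \emph{meets} the bad set, which is measurable at the right generation and yields the cleaner Bernoulli count $q=2(s+1)+[\varepsilon' b]$ bad children per interval (Lemmas \ref{prob1}--\ref{prob2}), followed by the counting Lemma \ref{counting} with the exponent $l=\log R/\log(1/C)$; your route is more technical but avoids needing the exact binomial structure. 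Two small slips to repair, neither conceptual: the rescaled curve over a $b$-adic interval is $C^1$ only away from the wrap-around point, so its oscillation is bounded by $K$ rather than $K/2$ (fixed by taking $\hat G=G_{+3\rho}$, say), and the pulled-back bad set inside $I$ may consist of $s+1$ rather than $s$ intervals (so $\varepsilon'=\varepsilon+2(s+1)b^{-m}$). Finally, note that your $\rho$ depends on the modulus of continuity of $\partial_y f$, so the dependence of $b_0$ on $f$ is of exactly the same nature as the paper's dependence through $\delta$ in Lemma \ref{delta}; this is not a defect relative to the paper, just the same implicit dependence.
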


\subsection{Remarks}\label{remarks}
\begin{itemize}
        \item For a map $F$ that exhibits local contraction of orbits on the fibre, the next natural question is about the global behaviour of the orbits. More specifically, we can study the structure and dynamics of invariant sets in $\mathbb{T}^2$ that attract a set of positive measure.
        \item Since $F$ has an expanding base map, it is not invertible. A classic construction to study such a map is to extend the base map to a homeomorphism on a solenoid, as in \cite{Homburg:2014}. Let us define the shift space $\Omega:= \{\omega \in \mathbb{T}^{\mathbb{Z}}: b \omega_i = \omega_{i+1} \}$ and consider the map $\bar{F}: \Omega \times \mathbb{T} \rightarrow \Omega \times \mathbb{T}$, given by $\bar{F}(\omega,y) = (\sigma(\omega), f(\omega_0,y))$, where $\sigma$ is the left shift map. Due to previous results by Le Jan \cite{Jan:1987}, the map $\bar{F}$ must have finitely many minimal invariant sets in the solenoid space. The projection of the invariant sets of $\bar{F}$ to $\mathbb{T}^2$ gives us the invariant sets of $F$. 
        \item For a typical non-monotonic function $f \in \mathcal{F}(\mathbb{T}^2)$, we expect the invariant sets of $F$ to be similar to the fat solenoid attractors in \cite{Tsujii:2001}. In Figure \ref{graph 1}, the plot of a hundred thousand points of an orbit contains hints of regular curves, which are likely the projection of invariant curves in the solenoid space $\Omega \times \mathbb{T}$.
        \item For a map $F$ with fat solenoid attractors, one could also further investigate the existence of absolutely continuous SRB measures supported on the attractors. We refer to \cite{Young:2002} for a discussion on SRB measures.
    \end{itemize}

%--------------------------- Examples -----------

\subsection{Examples}\label{examples}
Let us illustrate the result of Theorem \ref{theorem} with a simple and then a classic example.

\subsubsection{\textbf{Example 1:}} 
For each $0 < \tilde{\varepsilon} < 1/8$, let us consider a circle diffeomorphism $h_{\tilde{\varepsilon}}: \mathbb{T} \rightarrow \mathbb{T}$ such that $h_{\tilde{\varepsilon}}(0) = 0$ and $h'_{\tilde{\varepsilon}}(y) \leq 0.18$, for all $y \in \mathbb{T} \setminus [0.5-2\tilde{\varepsilon}, 0.5+2\tilde{\varepsilon}]$. For simplicity, let us consider functions $h_{\tilde{\varepsilon}}(y)$ that are monotonic on $\mathbb{T}$, linear on the arcs $(0.5-\tilde{\varepsilon}, 0.5+\tilde{\varepsilon})$ and $(0.5+2\tilde{\varepsilon}, 0.5-2\tilde{\varepsilon})$ on the torus, and quadratic on the rest of the torus. Then, let us consider a skew-product map $F_{\tilde{\varepsilon}}:\mathbb{T}^2 \rightarrow \mathbb{T}^2$ given by 
\begin{align*}
    F_{\tilde{\varepsilon}}(x,y) = \left( 7x, g(x)+h_{\tilde{\varepsilon}}(y) \right),
\end{align*}
where $g(x) = (1-\cos(3 \pi x))/2$ is a non-monotonic $C^1$ function on $\mathbb{T}$. For each $0<\tilde{\varepsilon}<1/8$, the $C^1$ function $g(x)+h_{\tilde{\varepsilon}}(y)$ satisfies the condition in Definition \ref{definition f}(a), where we have the open set $\tilde{G}_{\tilde{\varepsilon}} = \mathbb{T} \setminus [0.5-2\tilde{\varepsilon}, 0.5+2\tilde{\varepsilon}]$ and the rate of contraction in the region $\mathbb{T} \times \tilde{G}_{\tilde{\varepsilon}}$ to be $0.18<C<1$. 

For $\tilde{\varepsilon}=1/20$, the Figure \ref{graph 2} illustrates the intersection of the curve $y = g(x)+0.5$ with the region $\mathbb{T} \times \tilde{G}_{\tilde{\varepsilon}}$. Note that the measure of the set $\left\{x: g(x)+0.5 \notin \tilde{G}_{\tilde{\varepsilon}} \right\}$ decreases for smaller choices of $\tilde{\varepsilon}$. For a sufficiently small $\tilde{\varepsilon}$, we can show that the function $g(x)+h_{\tilde{\varepsilon}}(y)$ satisfies the condition in Definition \ref{definition f}(b). Then, by Theorem \ref{theorem}, we will observe convergence of orbits of arbitrarily close points on almost every fibre. In Figure \ref{graph 1}, we consider equidistributed points on a fixed fibre and plot their respective orbits with respect to $F_{1/6000}$. Under iteration, we observe that they synchronise to a single orbit. 

Similarly, for a map of the form $F(x,y) = (bx, g(x)+h(y))$, where $g(x)+h(y)$ is in $\mathcal{F}(\mathbb{T}^2)$, $h(y)$ has multiple attracting regions that satisfy an appropriate symmetry, and $g(x)$ has a sufficiently small norm, we can observe convergence of almost every point on a fibre to finitely many orbits. 

%----------------- FIGURE 2 -------------------------

\begin{figure}
    \centering
    \includegraphics[width=0.95\linewidth]{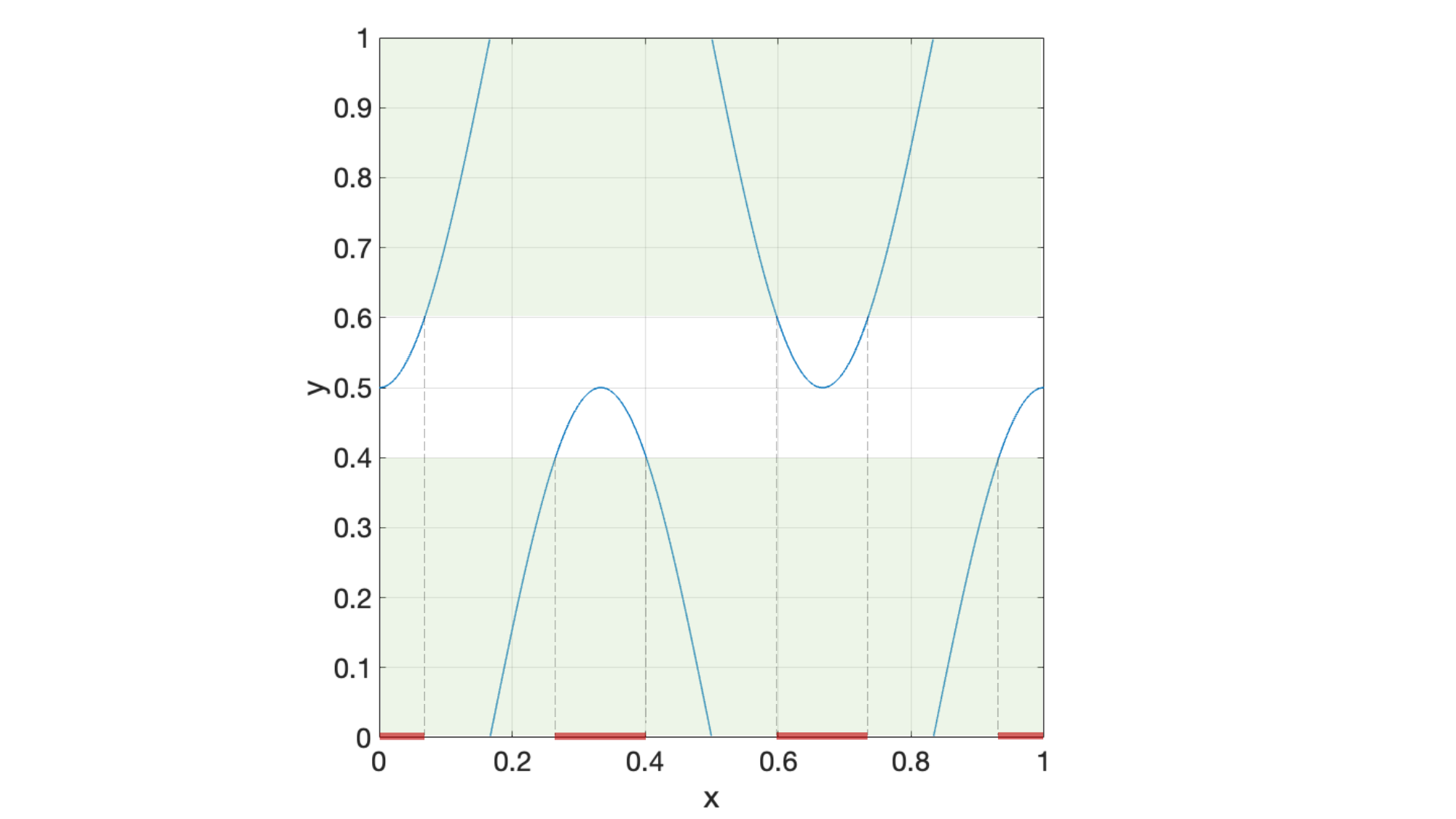}
    \caption{For the skew-product map $F_{1/20}(x,y)$, as defined in Example 1, the plot above illustrates the condition in Definition \ref{definition f}(b). Here, the curve is $y = g(x)+0.5$, the shaded region is $\mathbb{T} \times \tilde{G}_{1/20} = \mathbb{T} \times \left([0, 0.4) \cup (0.6,1] \right)$, and the set $\left\{x\in \mathbb{T}: g(x)+0.5 \notin \tilde{G}_{1/20}\right\}$ is marked in bold on the $x$ axis.} 
    \label{graph 2}
\end{figure}

%----------------------------------------------------

%------------------- Example 2 ----------------------

\subsubsection{\textbf{Example 2:}}
 Let us look at, a well-known example, the Schr\"{o}dinger cocycle denoted by $H: \mathbb{T} \times \mathbb{R}^2 \rightarrow \mathbb{T}\times \mathbb{R}^2$ and defined as 
 \begin{align*}
     H(x,\tilde{y}) = \left( bx, \begin{bmatrix}
0 & 1 \\
-1 & \lambda v(x) - E 
\end{bmatrix} \tilde{y} \right) = (bx, A(x)\tilde{y}), 
 \end{align*}
where $\lambda > 0$, $E \in \mathbb{R}$ and $v(x)$ is a certain non-constant $C^1$ function. Analogous to the condition in Definition \ref{definition f}(b), we consider specific conditions on $v(x)$. We assume that there exist constants $\varepsilon_0 > 0$, $\beta > 0$, and an integer $s \geq 1$ such that, for every $a \in \mathbb{R}$ and $\varepsilon < \varepsilon_0$, the set $\{x\in \mathbb{T}: |v(x) - a| < \varepsilon \}$ consists of at most $s$ intervals, and each interval is of length at most $\varepsilon^\beta$. This condition is satisfied by any non-constant real analytic function $v(x)$. Note that $A(x) \in SL(2, \mathbb{R})$ for all $x\in \mathbb{T}$. Let us denote the iterates of the cocycle $H$ by $H^n(x,\tilde{y}) = (b^nx, A^n(x)\tilde{y})$, where $A^n(x):= A(b^{n-1}x) \cdots A(x)$. By Kingman's subadditive ergodic theorem and ergodicity of the base map, we have following limit:
\begin{align}\label{max lyap}
  \tilde{L} = \lim_{n \rightarrow \infty} \frac{1}{n} \log \|A^{n}(x) \|, \ \ \text{ for a.e. } x\in\mathbb{T},
\end{align}
 and the constant $\tilde{L}$ is defined as the maximal Lyapunov exponent of the Schr\"{o}dinger cocycle $H$. 
 
 The Schr\"{o}dinger cocycle induces a non-linear cocycle on the projective space, denoted by $\tilde{F}:\mathbb{T} \times S \rightarrow \mathbb{T}\times S$ where $S = [-\pi/2,\pi/2]/\sim$. It is given by
\begin{equation}\label{ex 1 defn F}
    \tilde{F}(x,y) = (bx, \tilde{f}_x(y)) = \big(bx, \arctan\big(\lambda v(x) - E - 1/\tan y \big)  \big).
\end{equation}
From equation (\ref{lyapunov}), the fibered Lyapunov exponent at $(x,y)\in \mathbb{T}\times S$ is given by
\begin{equation}\label{fiber lyap schro}
    L(x,y) = -2\limsup_{n\rightarrow \infty} \frac{1}{n} \log \left(\prod_{i=0}^{n-1} |\tan(y_i)| \right).
\end{equation}

Since the action of $A(x) \in SL(2,\mathbb{R})$ is area preserving, the rate of the convergence of the angle in equation (\ref{fiber lyap schro}), and the rate of increase of the norm in equation (\ref{max lyap}) are related such that we have
 \begin{align}\label{ex1lyapunov}
     L(x,y) = -2\tilde{L} = -2\limsup_{n\rightarrow \infty} \frac{1}{n} \log \|A^{n}(x) \|, \ \ \text{ for a.e. } (x,y)\in \mathbb{T}\times S.
 \end{align}
In equation \eqref{ex 1 defn F}, the function $\tilde{f}_x(y)$ is $C^1$ for all $x\in \mathbb{T}$ and for all $y\in S\setminus \{0\}$. For fixed $\lambda$, $E$, and $v(x)$, $|\partial_x \tilde{f}_x(y)|$ is uniformly bounded in $\mathbb{T} \times S$ and $|\partial_y \tilde{f}_x(y)|$ is uniformly bounded in $\mathbb{T}\times (S\setminus \{0\})$. Let us define the set $G:= \left\{y\in S^1: |\tan y|> \sqrt{\lambda}\right\}$, which is an open interval. For the fixed $v(x)$, there exists constants $s$ and $\beta$ such that for any $y\in S$, the set $ \{x:\tilde{f}_x(y)\notin G\} = \left\{x: \left|\lambda v(x)-E-1/\tan y\right|< \sqrt{\lambda}\right\}$ consists of at most $s$ intervals, and each interval has length at most $\lambda^{-\beta/2}$. Thus, for a big enough $\lambda$, the Schr\"{o}dinger cocycle is in the realm of skew-product maps discussed in Theorem $\ref{theorem}$ and, by equations (\ref{max lyap}) and (\ref{ex1lyapunov}), the Schr\"{o}dinger cocycle must have a negative Lyapunov exponent $L(x,y)$, for a.e. $(x,y) \in \mathbb{T} \times S$. This is consistent with the main result in \cite{Bjerklov:2020} where, for a sufficiently big $b$ and $\lambda$, the maximal Lyapunov exponent $\tilde{L}$ is positive and uniformly bounded below.

%----------------------------------------- Section 4 -------------------------------
\section{Preliminaries}

\subsection{Notation}\label{notation} 
By definition, $\mathbb{T} = [0,1]/ \sim$, and thus there is a natural correspondence between constructions on $\mathbb{T}$ and those on $[0,1)$. Let $d$ denote the natural metric endowed on the torus $\mathbb{T}$. Let $\pi_2: \mathbb{T}^2 \rightarrow \mathbb{T}$ denote the projection of the second coordinate, given by $\pi_2(x,y) = y$ for all $(x,y)\in \mathbb{T}^2$.

\subsection{Counting lemma}\label{counting lemma}
For a fixed $f \in \mathcal{F}(\mathbb{T}^2)$, let $R := \|f\|_{C^1}$ and $C$ be constants corresponding to $f$, as mentioned in Definition \ref{definition f}(a). Since we have $0<C<1$, we can define a constant  $l :=  \frac{ \log(R)}{\log(1/C)} $. Note that, the fixed $l \geq 0$ is large enough such that $RC^l \leq 1$. Recall that, for the fixed function $f$, there exists an open set $G$ as defined in Definition \ref{definition f}(a).

\begin{lemma}\label{counting}
Given a point $(x_0,y_0) \in \mathbb{T}^2$ and an integer $N \geq 0$, assume that there exists $0 \leq p < \frac{1}{2(l+1)}$ such that $[Np]$ is the number of indices $j \in \{0, \dots, N-1 \}$ for which $y_{j} \notin G$. Then, $$\prod_{j=0}^{N-1} |\partial_{y} f(x_j, y_j)| < C^{N/2}.$$
\end{lemma}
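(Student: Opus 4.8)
The plan is to split the product over the $N$ iterates according to whether the point $y_j$ lies in the contracting set $G$ or not. Write $B := \{ j \in \{0,\dots,N-1\} : y_j \notin G\}$, so that $|B| = [Np]$, and let $A := \{0,\dots,N-1\} \setminus B$ with $|A| = N - [Np]$. On indices $j \in A$ we have the uniform bound $|\partial_y f(x_j,y_j)| < C < 1$ coming from Definition \ref{definition f}(a); on indices $j \in B$ we only have the crude bound $|\partial_y f(x_j,y_j)| \le \|f\|_{C^1} = R$. Therefore
\begin{equation*}
\prod_{j=0}^{N-1} |\partial_y f(x_j,y_j)| < C^{\,N - [Np]} \, R^{\,[Np]}.
\end{equation*}

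The next step is purely arithmetic: I need to show that $C^{N - [Np]} R^{[Np]} \le C^{N/2}$, i.e. that $C^{N/2 - [Np]} R^{[Np]} \le 1$, i.e. (taking $\log$ and recalling $\log C < 0$) that $[Np] \log R \le (N/2 - [Np]) \log(1/C)$, equivalently $[Np]\big(\log R + \log(1/C)\big) \le (N/2)\log(1/C)$, equivalently $[Np] (l+1) \le N/2$, where $l = \log R / \log(1/C)$. Since $[Np] \le Np < N/(2(l+1))$ by the hypothesis $p < \frac{1}{2(l+1)}$, this inequality holds. Combining with the strict inequality from the previous display gives the claim; note the strictness is automatic because $A$ is nonempty (indeed $|A| = N - [Np] > N - N/2 > 0$ for $N \ge 1$, and for $N = 0$ the empty product equals $1 < C^0$... in fact equals $C^0$, so one should read the statement for $N \ge 1$, or note the trivial case separately).

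There is essentially no hard part here — the lemma is a bookkeeping step isolating the mechanism by which a base map with large $b$ will help: if $b$ is large, then by Definition \ref{definition f}(b) and the expansion of $x \mapsto bx$, the set of $x$ whose orbit spends a large fraction of time outside $G$ has small measure, so that for a.e. fibre the hypothesis $p < \frac{1}{2(l+1)}$ is eventually met along the orbit. The only points requiring a little care are: (i) keeping track of floor functions versus the real quantity $Np$ (handled above by $[Np] \le Np$); and (ii) making sure the constant $l$ is chosen exactly so that $RC^l \le 1$, which is precisely its definition in the text, so that the exponent arithmetic closes. I would also remark that the bound $C^{N/2}$ — rather than something sharper — is chosen so that the $\tfrac12$ appears in the final statement $L(x,y) < \tfrac{\log C}{2}$ of Theorem \ref{theorem}, and the factor $6$ in the definition of $\varepsilon$ in Definition \ref{definition f}(b) leaves room for the later large-deviation argument that controls $p$.
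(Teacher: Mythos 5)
Your proof is correct and follows essentially the same route as the paper: bound the factors by $R$ on the $[Np]$ bad indices and by $C$ on the rest, then use $RC^{l}\leq 1$ and $[Np]<\tfrac{N}{2(l+1)}$ to close the exponent arithmetic, exactly as the paper does (your aside about strictness and the trivial $N=0$ case is a harmless refinement the paper glosses over).
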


\begin{proof}
Since $R = \|f\|_{C^1}$, for every $(x_j, y_j)\in \mathbb{T}^2$ we have $|\partial_{y} f(x_j, y_j)| \leq R$. From the conditions in Definition \ref{definition f}(a), if $(x_j, y_j)\in \mathbb{T} \times G$ then we have $|\partial_{y} f(x_j, y_j)|< C < 1$. Since $[Np]$ is the number of indices $j \in \{0, \dots, N-1 \}$ for which $(x_j,y_j)\notin \mathbb{T} \times G$, we have
\begin{align*}
    \prod_{j=0}^{N-1} |\partial_{y} f(x_j, y_j)| & \leq R^{[Np]} C^{N-[Np]} \\  
    & \leq (RC^{l})^{[Np]} C^{N-[Np](l+1)}.
\end{align*}
Since the constant $l$ is chosen such that $RC^l \leq 1$, we have $(RC^l)^{[Np]} \leq 1$. From the assumption on $p$, we have $[Np] < \frac{N}{2(l+1)}$ and thus $N-[Np](l+1)> \frac{N}{2}$. Since $0<C<1$, we obtain the result:
\begin{align*}
\prod_{j=0}^{N-1} |\partial_{y} f(x_j, y_j)| < C^{N/2}.
\end{align*}

\end{proof}

%-------------------------------- section 5 ------------------------------

\subsection{Probability Results}\label{probability}

For a fixed $b\geq 2$, let us fix some notations and constructions. The point $0 \in \mathbb{T}$ has exactly $b$ distinct pre-images under the map $T(x) = bx\ (\mod 1)$. Let us denote those pre-images by $0=\theta_0 < \theta_1 < \cdots < \theta_b$, where $\theta_b = 0\ (\mod 1)$. For each $j \in \{1,\dots ,b\}$, we define a set $I_j:= [\theta_{j-1},\theta_j) \subset \mathbb{T}$. Then, $\mathcal{P}_1 := \left\{I_j\right\}_{j=1}^{b}$ is a partition of $\mathbb{T} = [0,1]/ \sim$. For each $j \in \{1,\dots ,b\}$, let $T^{-1}_j$ denote the branch of the inverse map $T^{-1}$ which maps $\mathbb{T}$ onto $I_j$. For each $n\geq 1$ and $j_1,\dots, j_n \in \{1,\dots ,b\}$, we define a connected set 
\begin{align*}
    I_{j_1 \cdots j_n}:= I_{j_1}\cap T^{-1}(I_{j_2})\cap \cdots \cap T^{-n+1}(I_{j_n}).
\end{align*}
Then, for each integer $n\geq 1$, the set 
\begin{align*}
    \mathcal{P}_n := \left\{I_{j_1 \cdots j_n}: j_1,\dots, j_n \in \{1,\dots ,b\} \right\}
\end{align*}
is a partition of $\mathbb{T}$.

For ease of visualisation, note that the torus $\mathbb{T}$ is topologically equivalent to $[0,1]/ \sim$. Thus, for each $j \in \{1,\dots ,b\}$, the connected set $I_{j} \subset \mathbb{T}$ corresponds to the interval $\left[\frac{j-1}{b}, \frac{j}{b}\right) \subset [0,1)$, and the map $T_j^{-1}$ corresponds to the function $\frac{x+j-1}{b}$, where $x\in [0,1)$. Note that, for each $n$, the partition $\mathcal{P}_n$ is a refinement of $\mathcal{P}_{n-1}$. Further, each interval $I_{j_1 \cdots j_n}$ in partition $\mathcal{P}_n$ has a corresponding sequence of nested intervals given by
\begin{align}\label{nested I_js}
    I_{j_1} \supset I_{j_1 j_2} \supset \cdots \supset I_{j_1 \cdots  j_n}.
\end{align}

Let $y_0 \in \mathbb{T}$ be fixed and, with respect to $y_0$, let us characterise intervals in the partitions $(\mathcal{P}_n)_{n\geq 1}$ as either good or bad. For each $n\geq 1$ and $j_1, \dots ,j_n \in \{1, \dots, b\}$, the interval $I_{j_1 \cdots j_n}$ is called a good interval if, for every $x \in I_{j_1 \cdots j_n}$ the point $y_n = \pi_2 \circ F^{n}(x, y_0)$ belongs in the set $G$; however, if the interval $I_{j_1 \cdots j_n}$ is not good then it called bad. Equivalently, $I_{j_1 \cdots j_n}$ is bad if there exists $x \in I_{j_1 \cdots j_n}$ such that $y_n = \pi_2 \circ F^{n}(x, y_0)$ does not belong in $G$. Recall that $\pi_2$ denotes the projection on the second coordinate.

The following elementary probability results from \cite{Bjerklov:2020}, along with Lemma \ref{counting}, reduces our goal of estimating the rate of contraction along the fibre, for almost every orbit, to a problem of counting bad intervals. 

\begin{lemma}\label{prob1}
Let $1 \leq q \leq b$ be an integer. Assume that $q$ of the intervals in $\{I_{j}\}_{j=1}^{b}$ are bad. Furthermore, for each $n\geq 1$, $j_1,\dots ,j_n \in \{1,\dots , b\}$, and its corresponding interval $I_{j_1\cdots j_n}$, assume that $q$ of the intervals in $\{I_{j_1\cdots j_n j}\}_{j=1}^{b}$ are bad. Then, for each $n \geq 1 $ and $0\leq m \leq n$, the set
\begin{align*}
    \left\{x\in \mathbb{T}: \mbox{$x\in I_{j_1\cdots j_n}$ and exactly $m$ of the intervals $I_{j_1}, I_{j_1 j_2}, \dots, I_{j_1\cdots j_n}$ are bad } \right\}
\end{align*}
has measure $\binom{n}{m}\frac{q^m(b-q)^{n-m}}{b^n}$.
\end{lemma}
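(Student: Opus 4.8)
The plan is to prove the statement by induction on $n$, exploiting the self-similar structure of the partitions $(\mathcal{P}_n)_{n\geq 1}$. The key structural observations are: (i) each interval $I_{j_1\cdots j_n}$ has Lebesgue measure $b^{-n}$, and when passing to $\mathcal{P}_{n+1}$ it is subdivided into exactly $b$ subintervals $I_{j_1\cdots j_n 1}, \dots, I_{j_1\cdots j_n b}$, each of measure $b^{-(n+1)}$; (ii) by hypothesis, exactly $q$ of these $b$ children are bad, so the total measure of the bad children of a given $I_{j_1\cdots j_n}$ is $\tfrac{q}{b}\cdot b^{-n}$ and that of the good children is $\tfrac{b-q}{b}\cdot b^{-n}$; and (iii) the number of bad intervals in the nested chain $I_{j_1}\supset\cdots\supset I_{j_1\cdots j_n j}$ equals the number of bad intervals in $I_{j_1}\supset\cdots\supset I_{j_1\cdots j_n}$, plus one if $I_{j_1\cdots j_n j}$ is itself bad, and plus zero otherwise. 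For $0\le m\le n$, write $A_m^{(n)}$ for the measure of the set in the statement at level $n$.

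For the base case $n=1$: the set with $m=1$ is precisely the union of the $q$ bad intervals among $\{I_j\}_{j=1}^b$, of total measure $q/b=\binom{1}{1}q(b-q)^0/b$, and the set with $m=0$ is the union of the $b-q$ good intervals, of total measure $(b-q)/b$; both agree with the claimed formula.

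For the inductive step, suppose $A_m^{(n)}=\binom{n}{m}q^m(b-q)^{n-m}b^{-n}$ for all $0\le m\le n$. Fix $m$ with $0\le m\le n+1$. The set of $x$ lying in some level-$(n+1)$ interval with exactly $m$ bad intervals in its chain decomposes, according to observation (iii), into two disjoint parts: the $x$ whose parent interval $I_{j_1\cdots j_n}$ has exactly $m$ bad intervals in its chain and whose level-$(n+1)$ interval is good, and the $x$ whose parent has exactly $m-1$ bad intervals and whose level-$(n+1)$ interval is bad. Since within each parent the good (resp. bad) children occupy a fraction $\tfrac{b-q}{b}$ (resp. $\tfrac{q}{b}$) of the parent's measure by (ii), summing over the relevant parents gives
\begin{align*}
A_m^{(n+1)} &= \tfrac{b-q}{b}\,A_m^{(n)} + \tfrac{q}{b}\,A_{m-1}^{(n)}
= \binom{n}{m}\frac{q^m(b-q)^{n+1-m}}{b^{n+1}} + \binom{n}{m-1}\frac{q^{m}(b-q)^{n+1-m}}{b^{n+1}},
\end{align*}
with the convention that a binomial coefficient with a negative or out-of-range lower index is zero (this handles the boundary cases $m=0$ and $m=n+1$). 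Pascal's identity $\binom{n}{m}+\binom{n}{m-1}=\binom{n+1}{m}$ then yields $A_m^{(n+1)}=\binom{n+1}{m}q^m(b-q)^{n+1-m}b^{-(n+1)}$, completing the induction.

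The argument is essentially bookkeeping, so there is no serious obstacle; the only point requiring care is the disjointness claim in the inductive step — namely that partitioning the level-$(n+1)$ sets by the "badness count of the parent" and then by "good/bad child" genuinely produces disjoint pieces whose measures add, which follows because $\mathcal{P}_{n+1}$ refines $\mathcal{P}_n$ and distinct level-$(n+1)$ intervals are disjoint. I would also make explicit at the outset that the measures $b^{-n}$ and the "$q$ bad children" property are exactly what the hypotheses of the lemma provide, so that the induction closes.
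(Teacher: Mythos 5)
Your induction is correct, and it is essentially the paper's argument made rigorous: the paper (citing Bjerkl\"{o}v) only sketches the proof by viewing each nested interval $I_{j_1}\supset I_{j_1j_2}\supset\cdots$ as a sequence of Bernoulli trials with ``bad'' probability $q/b$, which is exactly the recursion $A_m^{(n+1)}=\tfrac{b-q}{b}A_m^{(n)}+\tfrac{q}{b}A_{m-1}^{(n)}$ you formalize with Pascal's identity. Your bookkeeping of measures ($b$ children of measure $b^{-(n+1)}$, exactly $q$ bad per parent, disjointness from the half-open partition refinement) is sound, so the proposal stands as a complete proof of the stated lemma.
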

If $q$ of the intervals in $\{I_{j_1\cdots j_{n-1}j}\}_{j=1}^{b}$ are bad, then each $I_{j_1\cdots j_n} \in \{I_{j_1\cdots j_{n-1}j}\}_{j=1}^{b}$ can be interpreted as a Bernoulli trial, where the probability of $I_{j_1\cdots j_n}$ being bad is $q/b$ and the probability of it being good is $1-q/b$. If there exists $1\leq q \leq b$ that satisfies the given assumption in Lemma \ref{prob1} for all $n\geq 1$, then each $I_{j_1\cdots j_n}$ can be interpreted as a sequence of $n$ Bernoulli trials, corresponding to the intervals in equation (\ref{nested I_js}), and we thus obtain the binomial expansion in Lemma \ref{prob1}.

\begin{lemma}\label{prob2}
Under the same hypothesis, for $n \in \mathbb{N}$, let us define a sequence of sets
\begin{multline*}
    M_n := 
    \{x\in \mathbb{T}: \mbox{ $x\in I_{j_1\cdots j_n}$ and at most $[2(q/b)n]$ of the intervals}\\
        \mbox{$I_{j_1}, I_{j_1 j_2}, \dots, I_{j_1\cdots j_n}$ are bad } \}.
\end{multline*}
Then, the set
\begin{equation*}
   M = \liminf_{n\rightarrow \infty} M_n = \bigcap_{N=1}^{\infty}\bigcup_{n=N}^{\infty} M_n,
\end{equation*}
has measure 1.
\end{lemma}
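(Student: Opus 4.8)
The plan is to convert this measure statement into an application of the (first) Borel--Cantelli lemma applied to the complementary sets $\mathbb{T}\setminus M_n$. Set $p:=q/b\in(0,1]$. First I would record what Lemma~\ref{prob1} and the discussion following it give: as $m$ ranges over $\{0,1,\dots,n\}$, the sets of Lemma~\ref{prob1} partition $\mathbb{T}$ (every $x$ lies in a unique nested chain $I_{j_1}\supset I_{j_1 j_2}\supset\cdots\supset I_{j_1\cdots j_n}$ as in (\ref{nested I_js}), and sorting $x$ by the number of bad intervals in this chain sorts $\mathbb{T}$ into those $n+1$ disjoint pieces). Since $M_n$ is exactly the union of the pieces with at most $[2pn]$ bad intervals, adding measures gives
\begin{equation*}
  \mathrm{Leb}(M_n)=\sum_{m=0}^{[2pn]}\binom{n}{m}p^m(1-p)^{n-m},\qquad
  \mathrm{Leb}(\mathbb{T}\setminus M_n)=\sum_{m=[2pn]+1}^{n}\binom{n}{m}p^m(1-p)^{n-m}.
\end{equation*}
The right-hand quantity is the upper tail, beyond its mean $pn$, of a $\mathrm{Binomial}(n,p)$ law.

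Next I would split into two cases. If $p\ge\tfrac12$, then $[2pn]\ge n$ for every $n\ge 1$, so $M_n=\mathbb{T}$ and the statement is trivial. If $p<\tfrac12$, the event ``more than $[2pn]$ of the intervals are bad'' is a genuine large deviation, since $[2pn]+1\ge 2pn$ places the target a fixed multiplicative factor above the mean; by a Hoeffding (or Chernoff) bound,
\begin{equation*}
  \mathrm{Leb}(\mathbb{T}\setminus M_n)\ \le\ \sum_{m\ge 2pn}\binom{n}{m}p^m(1-p)^{n-m}\ \le\ e^{-2np^2}.
\end{equation*}
Because $q\ge 1$ forces $p=q/b\ge 1/b>0$, the series $\sum_{n\ge 1}e^{-2np^2}$ converges, hence $\sum_{n\ge 1}\mathrm{Leb}(\mathbb{T}\setminus M_n)<\infty$.

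Finally, the first Borel--Cantelli lemma gives $\mathrm{Leb}\big(\limsup_n(\mathbb{T}\setminus M_n)\big)=0$, i.e.\ $\mathrm{Leb}\big(\liminf_n M_n\big)=1$; since $\liminf_n M_n\subseteq M$, this yields $\mathrm{Leb}(M)=1$, as claimed. I do not expect a real obstacle: the two points needing care are the combinatorial fact that the sets of Lemma~\ref{prob1} genuinely partition $\mathbb{T}$ (so their measures may be summed), and the exponential tail bound for the binomial distribution, for which any standard concentration inequality suffices. The role of the constant $2$ in the definition of $M_n$ is exactly to keep the threshold $2pn$ bounded away from the mean $pn$, which is what produces the summable exponential decay.
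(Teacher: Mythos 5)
Your proof follows essentially the same route as the paper's: express $|\mathbb{T}\setminus M_n|$ via Lemma \ref{prob1} as the upper tail of a $\mathrm{Binomial}(n,q/b)$ distribution, bound it by Hoeffding's inequality, and conclude by the first Borel--Cantelli lemma. The only differences are that you spell out the explicit bound $e^{-2np^2}$ and the trivial case $q/b\ge 1/2$, which the paper leaves implicit; the argument is correct.
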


\begin{proof}
From Lemma \ref{prob1} and the definition of set $M_n$, we have 
\begin{align*}
    |M_n| = \sum_{m\leq [2(q/b)n]} \binom{n}{m} \frac{q^m (b-q)^{n-m}}{b^n}.
\end{align*}
Thus, the complement of the set $M_n$ satisfies the following:
\begin{align}\label{count}
    |\mathbb{T} \setminus M_n| = \sum_{m > [2(q/b)n]} \binom{n}{m} \frac{q^m (b-q)^{n-m}}{b^n}.
\end{align}

Note that the expression in equation (\ref{count}) is the tail of the binomial distribution with parameters $n$ and $q/b$. Thus, by Hoeffding's inequality, the measure $|\mathbb{T} \setminus M_n|$ decays exponentially as $n \rightarrow \infty$. By the Borel-Cantelli lemma, the set $\limsup_{n \rightarrow \infty} (\mathbb{T}\setminus M_n)$ has measure zero. Thus, the set $M$ has measure 1. 
\end{proof}

%-------------------------------- section 6 ------------------------------

\subsection{Basic estimates for \texorpdfstring{$f$}{f}}

For each $f \in \mathcal{F}(\mathbb{T}^2)$, from the Definition $\ref{definition f}$(b), there exist corresponding constants $s\geq 1$ and $\varepsilon>0$ that provide the measure theoretic and topological estimates of the set $\{x\in \mathbb{T}: f_x(y) \notin G\}$, for any $y \in \mathbb{T}$. Let $\phi: \mathbb{T} \rightarrow \mathbb{T}$ be a function, with a corresponding point $y \in \mathbb{T}$ such that $|\phi(x)-y|$ is sufficiently small for all $x\in \mathbb{T}$. In this subsection, we obtain measure theoretic and topological estimates, similar to those in Definition \ref{definition f}(b), for sets of the form $\{x\in \mathbb{T}: f_x(\phi(x)) \notin G\}$. This is later used in the proof of Lemma \ref{proof1}. Now we establish the necessary parameters and  estimates. 

\begin{lemma}\label{deltay}
Let $\varepsilon < \varepsilon'<1$ be a constant. For any fixed $y_0 \in \mathbb{T}$, there exists $\delta_{y_0} = \delta_{y_0}(\varepsilon, \varepsilon') > 0$ and $s_{y_0} \geq 1$ such that the set 
$$\Big\{x\in \mathbb{T}: f_x(y)\notin G \text{ for some } y\in B_{\delta_{y_0}}(y_0) \Big\}$$
can be covered by a set that consists of at most $s_{y_0} \leq s$ intervals and that has measure at most $\varepsilon'$.
\end{lemma}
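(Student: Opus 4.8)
The plan is to exploit the $C^1$-continuity of $f$, specifically the uniform bound on $\partial_x f$ and the structure of $G$ as a finite union of open intervals. First I would fix $y_0$ and recall from Definition \ref{definition f}(b) that the set $A_y := \{x : f_x(y) \notin G\}$ consists of at most $s$ intervals and has measure at most $\varepsilon$, for every $y$. The key observation is that if $f_x(y) \notin G$ for some $y \in B_\delta(y_0)$, then $f_x(y_0)$ must lie within distance $R\delta$ of the complement of $G$, where $R = \|f\|_{C^1}$ bounds $|\partial_y f|$ (or one can instead track how far $f_x(y_0)$ is from $\mathbb{T} \setminus G$ using the mean value theorem). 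In other words, the set in the statement is contained in $\{x : f_x(y_0) \in \overline{(\mathbb{T} \setminus G)_{R\delta}}\}$, the set of $x$ for which $f_x(y_0)$ lies in the $R\delta$-neighbourhood of the closed complement of $G$.

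The second step is to control this enlarged set. Since $G$ is a finite union of open intervals, its complement $\mathbb{T} \setminus G$ is a finite union of closed intervals, and the $R\delta$-neighbourhood $(\mathbb{T}\setminus G)_{R\delta}$ is again a finite union of intervals whose total measure exceeds that of $\mathbb{T}\setminus G$ by at most $2 R\delta$ times the number of components. The set $\{x : f_x(y_0) \notin G\}$ already has measure $\leq \varepsilon$ and at most $s$ components; enlarging the target from $\mathbb{T}\setminus G$ to its $R\delta$-neighbourhood can be absorbed provided I also use the $C^1$-continuity of $f$ in a way that keeps the number of components bounded. Here one should argue that for $\delta$ small, each component of $\{x : f_x(y_0) \in (\mathbb{T}\setminus G)_{R\delta}\}$ is a slight enlargement of a component of $\{x : f_x(y_0)\in \mathbb{T}\setminus G\}$ — this uses that $f_{\cdot}(y_0)$ is $C^1$ in $x$, so its preimages of a thin collar around $\mathbb{T}\setminus G$ don't suddenly split into many pieces, at least after covering by a bounded number of intervals. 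Taking $\delta_{y_0}$ small enough that the total measure is at most $\varepsilon'$ (possible since $\varepsilon < \varepsilon'$ and the overshoot is $O(\delta)$) and the number of covering intervals is at most $s$ finishes the argument, with $s_{y_0} \leq s$.

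The main obstacle I anticipate is the claim that the number of intervals stays bounded by $s$ after passing to the $\delta$-neighbourhood: a priori, inflating the target set could cause nearby components of the preimage to merge (which only helps) but could also, if $f_{\cdot}(y_0)$ is highly oscillatory, reveal new components. The honest fix is that the lemma only asks for a \emph{cover} by at most $s_{y_0}$ intervals, not an exact description, so one can simply take the convex hull (in $\mathbb{T}$, suitably interpreted) of clusters of components — merging is always allowed in a cover, so the count can only go down from $s$, never up, as long as $\delta$ is small enough that no genuinely new far-away component appears, which is guaranteed because $\{x : f_x(y_0)\in \mathbb{T}\setminus G\}$ is already within $\varepsilon < \varepsilon'$ and its $R\delta$-collar preimage shrinks to it as $\delta \to 0$. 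Thus the count $s_{y_0} \leq s$ and the measure bound $\varepsilon'$ are both achieved by choosing $\delta_{y_0}(\varepsilon,\varepsilon')$ sufficiently small, depending only on $R$, on the geometry of $G$, and on the gap $\varepsilon' - \varepsilon$.
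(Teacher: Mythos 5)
Your first step is fine: by the Lipschitz bound $|\partial_y f|\le R$, the set in the lemma is contained in $E_\delta:=\{x\in\mathbb{T}: d(f_x(y_0),\mathbb{T}\setminus G)\le R\delta\}$. The gap is in how you handle $E_\delta$. First, the claim that ``the overshoot is $O(\delta)$'' has no basis: Definition \ref{definition f}(b) gives no transversality or modulus of continuity for $x\mapsto f_x(y_0)$ near $\partial G$, so $|E_\delta|$ tends to $|B|$ (where $B=\{x: f_x(y_0)\notin G\}$) with no particular rate. More seriously, the decisive assertion --- that for small $\delta$ ``no genuinely new far-away component appears'' --- is exactly the point that needs proof, and the justification you give (that $B$ has measure $\le\varepsilon$ and that the collar preimage shrinks to it) does not deliver it: smallness of \emph{measure} does not prevent $E_\delta$ from having components located far from $B$, since any $x$ where $f_x(y_0)$ lies inside $G$ but within $R\delta$ of $\partial G$ belongs to $E_\delta$, and such $x$ can occur anywhere on $\mathbb{T}$. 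If even one such component sits far from $B$, a cover by at most $s$ intervals must bridge a gap whose length is unrelated to $\varepsilon'-\varepsilon$, and the measure bound $\varepsilon'$ for the cover fails. As written, the count-plus-measure conclusion is asserted rather than derived.

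The missing ingredient is a compactness argument, which does make your route work: each $E_\delta$ is closed (preimage of a closed set under a continuous map), the family is nested in $\delta$, and $\bigcap_{\delta>0}E_\delta=B$ because $\mathbb{T}\setminus G$ is closed; hence for any open neighbourhood $U\supset B$ one has $E_\delta\subset U$ for all sufficiently small $\delta$ (and if $B=\emptyset$, then $E_\delta=\emptyset$ eventually). Taking $U$ to be the open $\eta$-neighbourhood of $B$ with $\eta=(\varepsilon'-\varepsilon)/(4s)$ yields a cover by at most $s$ intervals of total measure at most $\varepsilon+2s\eta<\varepsilon'$, which completes your argument and renders the $O(\delta)$ claim unnecessary. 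Note this is essentially the same compactness input the paper uses, organized from the complementary side: the paper shrinks each component $(a_i,b_i)$ of $\mathbb{T}\setminus B$ by $\gamma=(\varepsilon'-\varepsilon)/(4s)$, uses compactness of the shrunken intervals to extract a uniform $r>0$ with $(f_x(y_0)-r,f_x(y_0)+r)\subset G$ there, and then chooses $\delta_{y_0}$ by uniform continuity of $f$ in $y$; the covering set is then the complement of the shrunken good intervals, which automatically has at most $s$ components and measure at most $\varepsilon+2\gamma s$. So your approach is a legitimate dual of the paper's, but the localization of $E_\delta$ near $B$ must be proved, not inferred from the measure bound.
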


\begin{proof}
Let us fix $y_0 \in \mathbb{T}$, define a constant $\gamma:= \frac{1}{4s} (\varepsilon' - \varepsilon) > 0$, and define a set $B:= \Big\{ x : f_{x}(y_0) \notin G \Big\}$. Recall from Definition \ref{definition f}(b) that $B$ consists of at most $s$ intervals and measure $\varepsilon$. The lemma is trivially true when $B = \emptyset$ and so, let us assume that $B \neq \emptyset$. Let $1 \leq m \leq s$ denote the number of intervals in the set $\mathbb{T}\setminus B$, which will thus be of the form
\begin{align*}
    \mathbb{T} \setminus B = \left\{x\in \mathbb{T}: f_x(y_0) \in G \right\} = \bigcup_{i=1}^{m} (a_i, b_i).
\end{align*}
Since $G$ is open and $|\partial_xf_x(y_0)|<R$ for all $x\in \mathbb{T}$, for each interval $(a_i, b_i)$ we can choose a sufficiently small $r_i>0$ such that
\begin{align*}
    (a_i+\gamma, b_i-\gamma) \subset \left\{x\in (a_i, b_i): (f_x(y_0)-r_i, f_x(y_0)+r_i) \subset G\right\}.
\end{align*}
Since $\mathbb{T}\setminus B$ consists of finitely many intervals, for a sufficiently small $r>0$ we have 
\begin{align}\label{deltay eqn 1}
    \bigcup_{i=1}^{m}(a_i+\gamma, b_i-\gamma) \subset \left\{x\in \mathbb{T} \setminus B: (f_x(y_0)-r, f_x(y_0)+r) \subset G\right\}.
\end{align}
Since $f(x,y)$ is a $C^1$ map on a compact set, there exists $\delta_{y_0}>0$ such that if $y\in B_{\delta_{y_0}}(y_0)$ then $|f_x(y)-f_x(y_0)|<r$, for all $x\in \mathbb{T}$. Thus, for any $y \in B_{\delta_{y_0}}(y_0)$ and $x\in \mathbb{T}$, we have $f_x(y) \in (f_x(y_0)-r, f_x(y_0)+r)$ and this gives us the following set relation:
\begin{multline}\label{deltay eqn 2}
  \left\{x\in \mathbb{T} \setminus B: (f_x(y_0)-r, f_x(y_0)+r) \subset G \right\} \\
  \subset \left\{x\in \mathbb{T}\setminus B : \mbox{ $f_x(y)\in G$ for all $y\in B_{\delta_{y_0}}(y_0)$ } \right\}. 
\end{multline}
The complement of the set on the right-hand side of equation (\ref{deltay eqn 2}) is given by 
\begin{multline}\label{deltay eqn 3}
    \mathbb{T} \setminus \left\{x\in \mathbb{T}\setminus B : \mbox{$f_x(y)\in G,$ for all $y\in B_{\delta_{y_0}}(y_0)$} \right\} \\
    = B \cup \left\{ x\in \mathbb{T}: \mbox{ $f_{x}(y) \notin G$ for some $y\in B_{\delta_{y_0}}(y_0)$} \right\}.
\end{multline}
Thus, from equation (\ref{deltay eqn 1}), (\ref{deltay eqn 2}) and (\ref{deltay eqn 3}), we obtain the following: 
\begin{align*}
    \left\{ x\in \mathbb{T}: \mbox{ $f_{x}(y) \notin G$ for some $y\in B_{\delta_{y_0}}(y_0)$} \right\} \subset \bigcap_{i=1}^{m} \left(\mathbb{T} \setminus (a_i+\gamma, b_i-\gamma)\right).
\end{align*}
The set $\bigcap_{i=1}^{m} \left(\mathbb{T} \setminus (a_i+\gamma, b_i-\gamma)\right)$ has finitely many intervals, which is denoted by $s_{y_0}\leq s$, and  has measure at most $\varepsilon+2\gamma s$. For the specific choice of $\gamma$, the set
$\{ x\in \mathbb{T}: \mbox{ $f_{x}(y) \notin G$ for some } y\in B_{\delta_{y_0}}(y_0) \}$ has measure at most $\varepsilon'$ and this proves the lemma.

\end{proof}

\begin{lemma}\label{delta}
For each $\varepsilon < \varepsilon' < 1$, there exists $\delta > 0$ such that, for any $y_0 \in \mathbb{T}$, the set 
$$ 
\Big\{ x\in \mathbb{T}: f_x(y) \notin G, \mbox{ for some } y\in B_{\delta}(y_0) \Big\}
$$
can be covered by a set consisting of at most $s$ intervals and that has measure at most $\varepsilon'$.
\end{lemma}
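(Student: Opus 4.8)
The plan is to deduce the uniform statement from Lemma \ref{deltay} by a compactness argument in the $y_0$ variable. The obstacle to overcome is that Lemma \ref{deltay} produces a $\delta_{y_0}$ that a priori depends on the point $y_0$; I need to extract a single $\delta>0$ that works for every $y_0\in\mathbb{T}$, while keeping the bound on the number of intervals uniformly $\leq s$ and the measure bound $\leq\varepsilon'$.

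First I would fix an intermediate level $\varepsilon<\varepsilon''<\varepsilon'$ and apply Lemma \ref{deltay} with $\varepsilon''$ in place of $\varepsilon'$: for each $y_0\in\mathbb{T}$ this yields $\delta_{y_0}>0$ and $s_{y_0}\leq s$ such that the set $\{x: f_x(y)\notin G \text{ for some } y\in B_{\delta_{y_0}}(y_0)\}$ is covered by at most $s_{y_0}$ intervals of total measure at most $\varepsilon''$. The open balls $\{B_{\delta_{y_0}/2}(y_0)\}_{y_0\in\mathbb{T}}$ cover the compact space $\mathbb{T}$, so finitely many of them, say centred at $y^{(1)},\dots,y^{(k)}$, already cover $\mathbb{T}$. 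Set $\delta:=\tfrac12\min_{1\leq i\leq k}\delta_{y^{(i)}}>0$. Now given an arbitrary $y_0\in\mathbb{T}$, it lies in some $B_{\delta_{y^{(i)}}/2}(y^{(i)})$, and then by the triangle inequality $B_{\delta}(y_0)\subset B_{\delta_{y^{(i)}}}(y^{(i)})$, since any $y\in B_\delta(y_0)$ satisfies $d(y,y^{(i)})\leq d(y,y_0)+d(y_0,y^{(i)})<\delta+\delta_{y^{(i)}}/2\leq\delta_{y^{(i)}}$. Consequently
$$
\Big\{x\in\mathbb{T}: f_x(y)\notin G\text{ for some }y\in B_\delta(y_0)\Big\}\subset\Big\{x\in\mathbb{T}: f_x(y)\notin G\text{ for some }y\in B_{\delta_{y^{(i)}}}(y^{(i)})\Big\},
$$
and the right-hand side is covered by at most $s_{y^{(i)}}\leq s$ intervals of measure at most $\varepsilon''<\varepsilon'$. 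Thus the same covering works for $y_0$.

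The only subtlety is whether the covering inherited from $y^{(i)}$ still has the stated form when viewed as a covering associated to $y_0$: it does, because being covered by at most $s$ intervals of measure at most $\varepsilon''$ is a property of a fixed subset of $\mathbb{T}$, independent of which $y_0$ we attach to it, and $\varepsilon''<\varepsilon'$ gives the required slack. (Passing through $\varepsilon''$ rather than using $\varepsilon'$ directly is a convenience ensuring strict inequalities are comfortably satisfied; one could equally apply Lemma \ref{deltay} with $\varepsilon'$ itself.) This $\delta$ depends only on $\varepsilon$, $\varepsilon'$ (through the finite subcover and the values $\delta_{y^{(i)}}$), and on $f$, but not on $y_0$, which is exactly the uniformity asserted in the lemma. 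I expect the compactness step — producing the finite subcover and verifying the halving gives genuine containment of balls — to be the only real content; everything else is bookkeeping already done in Lemma \ref{deltay}.
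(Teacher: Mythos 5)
Your proposal is correct and follows essentially the same route as the paper: apply Lemma \ref{deltay} pointwise in $y_0$, extract a finite subcover of $\mathbb{T}$ by compactness, choose a uniform $\delta$ so that every $\delta$-ball lies inside some ball of the finite cover, and transfer the covering by at most $s$ intervals of measure at most $\varepsilon'$ via the resulting set inclusion. The only difference is how the uniform $\delta$ is produced -- you use the standard half-radius (Lebesgue number) trick $\delta = \tfrac12\min_i \delta_{y^{(i)}}$ with the cover $\{B_{\delta_{y}/2}(y)\}$, whereas the paper takes the minimum of the radii of pairwise intersections of the full-radius balls -- and your variant is, if anything, the cleaner and more robust way to justify the containment $B_\delta(y_0)\subset B_{\delta_{y^{(i)}}}(y^{(i)})$.
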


\begin{proof}

Let us fix a constant $\varepsilon < \varepsilon' < 1$. For each $y\in \mathbb{T}$, there exist constants $\delta_{y}>0$ and $s_{y}$ as defined in Lemma \ref{deltay}. Let us consider the collection $\big\{ B_{\delta_{y}}(y)\big\}_{y \in \mathbb{T}}$ that forms an open cover for $\mathbb{T}$. Since $\mathbb{T}$ is compact, there exists a finite subcover denoted by $\big\{ B_{\delta_{y_i}}(y_i)\big\}_{i=1}^N$. Let us define an index set
$$
J:= \left\{(i,j): \mbox{$i,j \in \{1,\dots ,N\}$ and $B_{\delta_{y_i}}(y_i) \cap B_{\delta_{y_j}}(y_j) \neq \phi$} \right\}.
$$
For each $(i,j) \in J$, let $r_{ij}$ denote the radius of the intersection $B_{\delta_{y_i}}(y_i) \cap B_{\delta_{y_j}}(y_j)$, and let us choose $\delta = \min \{r_{ij}: (i,j)\in J \}$. By the choice of $\delta$, for each $y_0 \in \mathbb{T}$, there exists an open set $B_{\delta_{y_j}}(y_j)$ in the finite cover such that $B_{\delta}(y_0) \subset B_{\delta_{y_j}}(y_j)$, and thus
\begin{align}\label{delta eqn 1}
    \left\{x\in \mathbb{T}: \mbox{$f_{x}(y) \notin G$ for some $y\in B_{\delta}(y_0)$} \right\} \subset  \left\{x\in \mathbb{T}: \mbox{$f_{x}(y) \notin G$ for some $y\in B_{\delta_{y_j}}(y_j)$} \right\}.
\end{align}
Recall that, we have $s_{y_j} \leq s$ for all $j\in \{1,\dots, N\}$.
From the choice of $\delta_{y_j}$, in Lemma \ref{deltay}, the set $\left\{x\in \mathbb{T}: \mbox{$f_{x}(y) \notin G$ for some $y\in B_{\delta_{y_j}}(y_j)$} \right\}$ can be covered by a set consisting of at most $s$ intervals and has measure at most $\varepsilon'$. Thus, from the equation (\ref{delta eqn 1}), the set $\left\{x\in \mathbb{T}: \mbox{$f_{x}(y) \notin G$ for some $y\in B_{\delta}(y_0)$} \right\}$ can be covered by a set consisting of at most $s$ intervals and has measure at most $\varepsilon'$.

\end{proof}

%----------------------------------------- Section 7 -------------------------------

\section{Additional Construction}

For a fixed $y_0 \in \mathbb{T}$ and for each $j\in \{1,\dots, b\}$, let $\phi_j: \mathbb{T} \rightarrow \mathbb{T}$ be defined as 
\begin{align}\label{definition phi_j}
    \phi_j(x) := \pi_2 \circ F(T_j^{-1}(x),y_0) = f_{T^{-1}_j(x)}(y_0),
\end{align}
where $T^{-1}_j:\mathbb{T} \rightarrow I_j$ is as defined in Subsection \ref{probability}. Further, for each $n>1$ and $j_1,\dots, j_n \in \{1,\dots ,b\}$, we define the function $\phi_{j_1\cdots j_n}: \mathbb{T} \rightarrow \mathbb{T}$ inductively by 
\begin{align*}
    \phi_{j_1\cdots j_n}(x) = \pi_2 \circ F \left( T^{-1}_{j_n}(x), \phi_{j_1\cdots j_{n-1}}\left( T^{-1}_{j_n}(x)\right)\right).
\end{align*}
Since $f \in C^{1}(\mathbb{T}^2)$, for each $n \geq 1$ and $j_1, \dots , j_n \in \{1,\dots ,b\}$, the function $\phi_{j_1\cdots j_n}(x)$ is $C^1$ at every $x\in \mathbb{T} \setminus \{0\}$, but it is not necessarily continuous at $0\in \mathbb{T}$. Thus, the function $\phi_{j_1\cdots j_n}(x)$ is $C^1$ on $\mathbb{T} \setminus \{0\}$. From equation (\ref{definition phi_j}), note that the graph of $\phi_{j}: \mathbb{T}\rightarrow \mathbb{T}$ is the same as the image set of the function $F: I_{j} \times \{y_0\} \rightarrow \mathbb{T}\times \mathbb{T}$. Similarly, the graph of the function $\phi_{j_1\cdots j_n}: \mathbb{T} \rightarrow \mathbb{T}$ is the same as the image set of the function $F^{n}: I_{j_1\cdots j_n} \times \{y_0\} \rightarrow \mathbb{T}\times \mathbb{T}$. Thus, estimating the set of points $x\in I_{j_1\cdots j_n}$ for which $f_{x}^n(y_0) \in G$, it is equivalent to estimating the set of points $x\in \mathbb{T}$ for which $\phi_{j_1 \cdots j_n}(x) \in G$.

% , that is, $\phi_j(x)$, for $x\in \mathbb{T}$, captures the function $f_x(y_0)$ for $x\in I_j$
% , that is, $\phi_{j_1 \cdots j_n}(x)$ for $x\in \mathbb{T}$, captures the function $f_{b^{n-1}x}\circ \cdots \circ f_x(y_0)$, for $x\in I_{j_1 \cdots j_n}$

\begin{lemma}\label{derivative}
For each $n\geq 1$ and $j_1, \dots, j_n \in \{1, \dots, b\}$, for any $x \in \mathbb{T}\setminus \{0\} $, we have
\begin{align}\label{derivative bound}
    \big| \phi_{j_1 \cdots j_n}'(x) \big| \leq \frac{R}{b}+ \cdots +\frac{R^n}{b^n}.
\end{align}
\end{lemma}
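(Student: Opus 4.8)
The plan is to prove the bound by induction on $n$, exploiting the recursive definition of $\phi_{j_1\cdots j_n}$ together with the chain rule. For the base case $n=1$, recall that $\phi_j(x) = f_{T_j^{-1}(x)}(y_0)$, so by the chain rule $\phi_j'(x) = \partial_x f_{T_j^{-1}(x)}(y_0)\cdot (T_j^{-1})'(x)$. Since $T_j^{-1}$ corresponds to the affine map $(x+j-1)/b$, we have $(T_j^{-1})'(x) = 1/b$; and $|\partial_x f| \le R = \|f\|_{C^1}$ everywhere, so $|\phi_j'(x)| \le R/b$, which is the claimed bound for $n=1$.

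For the inductive step, assume the bound holds for all indices of length $n-1$. Differentiating the recursion
$$\phi_{j_1\cdots j_n}(x) = \pi_2\circ F\!\left(T_{j_n}^{-1}(x),\, \phi_{j_1\cdots j_{n-1}}\!\left(T_{j_n}^{-1}(x)\right)\right) = f_{T_{j_n}^{-1}(x)}\!\left(\phi_{j_1\cdots j_{n-1}}\!\left(T_{j_n}^{-1}(x)\right)\right)$$
with the chain rule, and writing $u = T_{j_n}^{-1}(x)$ (so $u' = 1/b$), I get
$$\phi_{j_1\cdots j_n}'(x) = \Big(\partial_x f_u(v) + \partial_y f_u(v)\cdot \phi_{j_1\cdots j_{n-1}}'(u)\Big)\cdot \frac{1}{b},$$
where $v = \phi_{j_1\cdots j_{n-1}}(u)$. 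Now bound $|\partial_x f_u(v)| \le R$ and $|\partial_y f_u(v)| \le R$, and apply the inductive hypothesis $|\phi_{j_1\cdots j_{n-1}}'(u)| \le R/b + \cdots + R^{n-1}/b^{n-1}$. This yields
$$\big|\phi_{j_1\cdots j_n}'(x)\big| \le \frac{1}{b}\left(R + R\Big(\frac{R}{b}+\cdots+\frac{R^{n-1}}{b^{n-1}}\Big)\right) = \frac{R}{b} + \frac{R^2}{b^2} + \cdots + \frac{R^n}{b^n},$$
which closes the induction.

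One technical point worth addressing is the domain: $\phi_{j_1\cdots j_n}$ is only $C^1$ on $\mathbb{T}\setminus\{0\}$, and one must check that $T_{j_n}^{-1}(x)$ lands in the set where the inductive hypothesis applies. Since $T_{j_n}^{-1}$ maps $\mathbb{T}$ into $I_{j_n} = [\theta_{j_n-1},\theta_{j_n})$, and the only bad point is $0 = \theta_0$, one should note that for $x \in \mathbb{T}\setminus\{0\}$ the point $u = T_{j_n}^{-1}(x)$ avoids the problematic endpoint whenever $j_n \neq 1$ or $x \neq 0$; in the one remaining corner the statement is already restricted to $x \neq 0$, so $u = T_1^{-1}(x) \neq 0$ and the hypothesis applies. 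I do not expect any real obstacle here — this is a routine induction, and the geometric-series form of the bound is exactly what the chain-rule recursion produces. The only mild care needed is bookkeeping the base-map derivative factor $1/b$ correctly and handling the non-smooth point $0$, as just described.
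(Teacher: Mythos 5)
Your proof is correct and is essentially the same as the paper's: induction on $n$, with the chain rule applied to the recursion $\phi_{j_1\cdots j_n}(x)=f_{T_{j_n}^{-1}(x)}\big(\phi_{j_1\cdots j_{n-1}}(T_{j_n}^{-1}(x))\big)$, the bounds $|\partial_x f|,|\partial_y f|\le R$ and $|(T_{j_n}^{-1})'|=1/b$, and the inductive hypothesis giving the geometric sum. The extra remark about the point $0$ is fine but not needed beyond what the paper already notes.
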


\begin{proof}
For $n=1$ and each $j \in \{1,\dots ,b\}$, the map $\phi_j(x) = f_{T^{-1}_j(x)}(y_0)$ satisfies the bound 
\begin{align*}
    |\phi_j'(x)| \leq |\partial_{x}f_{T^{-1}_j(x)}(y_0)| \leq \frac{R}{b},
\end{align*}
for all $x\in \mathbb{T} \setminus \{0\}$. For an arbitrary $n \geq 2$, let us assume that for each $j_1, \dots ,j_{n-1} \in \{1,\dots ,b\}$ we have the bound in equation (\ref{derivative bound}). For any $j_1,\dots, j_n \in \{1,\dots ,b\}$, since 
$$\phi_{j_1 \cdots j_n}(x) = \pi_2 \circ F\left(T_{j_n}^{-1}(x), \phi_{j_1\cdots j_{n-1}}\left(T_{j_n}^{-1}(x)\right) \right)$$
for every $x\in \mathbb{T}\setminus \{0\}$, we have
\begin{align*}
    \big| \phi_{j_1 \cdots j_{n-1} j_n}'(x) \big| 
    & \leq \Big| \partial_{x} \Big( f_{T_{j_n}^{-1}(x)} \Big( \phi_{j_1 \cdots j_{n-1}}\big(T_{j_n}^{-1}(x)\big) \Big) \Big) \Big| \\ 
    & \leq \|f\|_{C^1}\ |\partial_x(T_{j_n}^{-1}(x))| + \|f\|_{C^1}\ |\phi_{j_1 \cdots j_{n-1}}'(T_{j_n}^{-1}(x))|\ |\partial_x(T_{j_n}^{-1}(x))|.
\end{align*}
Then, using the bounds  $\|f\|_{C_1} \leq R$, $|\partial_x(T_{j_n}^{-1}(x))| = 1/b$, and the bound on $|\phi_{j_1\cdots j_{n-1}}'(x)|$ from the inductive assumption, we prove that for all $x\in \mathbb{T}\setminus \{0\}$,
\begin{align*}
    \big| \phi_{j_1 \cdots j_{n-1} j_n}'(x) \big| 
    & \leq R\ \frac{1}{b} + R\ \Big(\frac{R}{b} + \cdots + \frac{R^{n-1}}{b^{n-1}}\Big)\ \frac{1}{b} \\
    & \leq \frac{R}{b}+ \cdots +\frac{R^n}{b^n}.
\end{align*}
\end{proof}

%-------------------------------- section 8 ------------------------------

\section{Proof of the theorem}

For $f \in \mathcal{F}(\mathbb{T}^2)$, let us fix a constant $\varepsilon'$ such that $0 < \varepsilon'< \varepsilon$, where the constant $\varepsilon$ is corresponding to $f$ as defined in Definition \ref{definition f}(b). Recall the constant $l \geq 0$ from Subsection \ref{counting lemma}, defined as $l = \frac{\log(R)}{\log (1/C)}$, that satisfies the inequality $RC^l \leq 1$. Let us fix an arbitrary point $y_0 \in \mathbb{T}$.

 In Lemma \ref{proof1}, we estimate the maximum number of bad intervals in each level of partition of $\mathbb{T}$, with respect to the arbitrarily chosen $y_0$. By Lemma \ref{prob2}, this in turn gives us an upper bound on the number of elements, in a typical orbit, that do not belong to $\mathbb{T} \times G$. Finally, in Lemma \ref{proof2} we obtain a uniform bound for the Lyapunov exponent, using Lemma \ref{counting lemma}. 
 
\begin{lemma}\label{proof1}
At most $\big[ \frac{b}{4(l+1)} \big]$ of the intervals in $\{I_j\}_{j=1}^b$ are bad. 
And for each interval $I_{j_1 \cdots j_{n}}$, at most $\big[ \frac{b}{4(l+1)} \big]$ of the intervals in $\{I_{j_1 \cdots j_{n} j}\}_{j=1}^{b}$ are bad.
\end{lemma}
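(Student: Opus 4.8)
The plan is to rewrite ``bad'' in terms of the branch functions $\phi_{j_1\cdots j_n}$ from Section 3, to confine those functions to a small ball using Lemma \ref{derivative}, and then to reduce the count of bad intervals to estimating how many of the $b$ consecutive subintervals $I_1,\dots,I_b$ (each of length $1/b$) a set of small measure can meet. Recall from Section 3 that $I_{j_1\cdots j_n}$ is bad if and only if $\phi_{j_1\cdots j_n}(x)\notin G$ for some $x\in\mathbb{T}$, and that $\phi_{j_1\cdots j_n j}(x)=f_{T_j^{-1}(x)}\big(\phi_{j_1\cdots j_n}(T_j^{-1}(x))\big)$; since $T_j^{-1}$ maps $\mathbb{T}$ onto $I_j$, substituting $u=T_j^{-1}(x)$ shows that $I_{j_1\cdots j_n j}$ is bad iff $f_u\big(\phi_{j_1\cdots j_n}(u)\big)\notin G$ for some $u\in I_j$. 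Likewise $I_j$ is bad iff $f_u(y_0)\notin G$ for some $u\in I_j$.

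The first step is a confinement estimate for the branch functions, uniform in the length $n$ and in the word $j_1,\dots,j_n$. For $b>R:=\|f\|_{C^1}$, the geometric sum in Lemma \ref{derivative} is bounded by $\eta:=R/(b-R)$, so $|\phi_{j_1\cdots j_n}'|\le\eta$ on $(0,1)$. A straightforward induction along the recursion defining $\phi_{j_1\cdots j_n}$ shows that this function is continuous on $[0,1)$ (the sole discontinuity being at the seam $0\sim 1$); hence, lifting to $\mathbb{R}$, for $\eta<\tfrac12$ its image lies in the ball $B_\eta\big(\phi_{j_1\cdots j_n}(0)\big)$. Thus, if $b$ is large enough that $\eta\le\delta$, where $\delta=\delta(\varepsilon,\varepsilon')$ is the constant furnished by Lemma \ref{delta}, then $\phi_{j_1\cdots j_n}([0,1))\subset B_\delta(y_\ast)$ with $y_\ast:=\phi_{j_1\cdots j_n}(0)$.

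Now fix $j_1,\dots,j_n$ and consider the children $\{I_{j_1\cdots j_n j}\}_{j=1}^b$. If some $u\in I_j$ witnesses badness of $I_{j_1\cdots j_n j}$, then $f_u(y)\notin G$ with $y=\phi_{j_1\cdots j_n}(u)\in B_\delta(y_\ast)$, so $u$ belongs to the set of Lemma \ref{delta}, which is covered by at most $s$ intervals of total measure at most $\varepsilon'$. (For the first assertion of the lemma one argues identically, with $\phi_{j_1\cdots j_n}$ replaced by the constant $y_0$, invoking Definition \ref{definition f}(b) directly with $\varepsilon$ in place of $\varepsilon'$.) Since $I_1,\dots,I_b$ partition $\mathbb{T}$ into $b$ consecutive intervals of length $1/b$, and an interval of length $L$ meets at most $Lb+2$ of them, at most $\varepsilon' b+2s$ of the children are bad (resp.\ at most $\varepsilon b+2s$ of $I_1,\dots,I_b$).

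It remains to check the arithmetic against the target $\big[\tfrac{b}{4(l+1)}\big]$. Since $l=\log R/\log(1/C)$, one has $l+1=\log(R/C)/\log(1/C)$, so the constraint in Definition \ref{definition f}(b) reads exactly $\varepsilon<\tfrac{1}{6(l+1)}$, and $\varepsilon'$ may be fixed with $\varepsilon<\varepsilon'<\tfrac{1}{6(l+1)}$ (a nonempty range); hence both counts above are $<\tfrac{b}{6(l+1)}+2s$. It therefore suffices to choose $b_0=b_0(\|f\|_{C^1},C,s,\varepsilon)$ so large that for every $b>b_0$ one has $R/(b-R)\le\min\{\delta,\tfrac12\}$ and $\tfrac{b}{6(l+1)}+2s\le\tfrac{b}{4(l+1)}-1\le\big[\tfrac{b}{4(l+1)}\big]$, the latter holding as soon as $b\ge 12(l+1)(2s+1)$. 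The main obstacle, and the only place where strong expansion of the base map enters, is the confinement step of the second paragraph: the iteratively built functions $\phi_{j_1\cdots j_n}$ must stay inside a single $\delta$-ball regardless of $n$ and of the branch word, which is precisely what the summable bound $\sum_{k\ge 1}(R/b)^k$ of Lemma \ref{derivative} delivers once $b$ exceeds the relevant threshold.
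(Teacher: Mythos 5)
Your proposal is correct and follows essentially the same route as the paper: recast badness of $I_{j_1\cdots j_n j}$ via the branch functions $\phi_{j_1\cdots j_n}$, use Lemma \ref{derivative} to confine each such function to a single $\delta$-ball for $b$ large, apply Lemma \ref{delta} (and Definition \ref{definition f}(b) at the first level), and count how many length-$1/b$ intervals a set of at most $s$ intervals and measure at most $\varepsilon'$ can meet, finishing with the same arithmetic comparison $\varepsilon<\varepsilon'<\tfrac{1}{6(l+1)}$ against $\big[\tfrac{b}{4(l+1)}\big]$. The only differences are cosmetic (your count $\varepsilon'b+2s$ versus the paper's $2(s+1)+[\varepsilon'b]$, and your slightly more careful treatment of the seam at $0$), so no gap.
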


\begin{proof}
Recall that, the set $I_j$ is bad if $I_j \cap \{x\in \mathbb{T}: f(x,y_0)\notin G\} \neq \emptyset$. From Definition \ref{definition f}(b) and since $\varepsilon < \varepsilon'$, the set $\{x\in \mathbb{T}: f(x,y_0)\notin G\}$ has at most $s$ intervals and has measure at most $\varepsilon'$. Since $|I_j|=1/b$, the maximum number of intervals in $\{I_j\}_{j=1}^{b}$ that can intersect a set of measure $\varepsilon'$, and consisting of $s$ intervals is given by 
\begin{align*}
    \#\left\{1\leq j \leq b:\mbox{$I_j$ is bad} \right\} \leq 2(s+1)+[\varepsilon'b].
\end{align*}

Let us fix the notation $q:= 2(s+1)+[\varepsilon'b]$. For any $n\geq 1$, let us consider the set $\left\{1\leq j \leq b: I_{j_1\cdots j_n j} \mbox{ is bad}\right\}$. By definition, the interval $I_{j_1\cdots j_n j}$ is bad if and only if there exists $x\in \mathbb{T}$ such that $\phi_{j_1 \cdots j_n j}(x) \notin G$. Since $T_j^{-1}:\mathbb{T} \rightarrow I_j$ is a bijective function, there exists $x\in \mathbb{T}$ for which $\phi_{j_1\cdots j_nj}(x) \notin G$ if and only if there exists $x\in I_j$ such that $f_x(\phi_{j_1\cdots j_n}(x)) \notin G$. Thus, for each $j_1, \dots, j_n,j\in \{1,\dots ,b\}$, the interval $I_{j_1 \cdots j_n j}$ is bad if and only if 
\begin{align}\label{proof1 eqn 0}
    \{x\in I_j: f_x(\phi_{j_1\cdots j_n}(x)) \notin G\} \neq \emptyset.
\end{align}
Using the upper bound on $|\phi'_{j_1\cdots j_n}(x)|$, from Lemma \ref{derivative}, we obtain 
\begin{align*}
    |\phi_{j_1\cdots j_n}(x_1)-\phi_{j_1\cdots j_n}(x_2)| \leq \left(\frac{R}{b}+\cdots +\frac{R^n}{b^n} \right)|x_1-x_2|,
\end{align*}
 for any $x_1, x_2 \in \mathbb{T} \setminus \{0\}$.
For $b$ big enough such that $\sum_{i=1}^{\infty}(\frac{R}{b})^i < \delta$, where $\delta$ is as defined in Lemma \ref{delta}, we have 
\begin{align*}
    |\phi_{j_1\cdots j_n}(x_1)-\phi_{j_1\cdots j_n}(x_2)| < \delta,
\end{align*}
for all $x_1, x_2 \in \mathbb{T}$. From the choice of $\delta$, for each $\phi_{j_1\cdots j_n}$ there exists $y_{j_1\cdots j_n} \in \mathbb{T}$ such that $\{\phi_{j_1\cdots j_n}(x): x\in \mathbb{T}\} \subset B_{\delta}(y_{j_1\cdots j_n})$, and thus 
\begin{align*}
    \{x\in \mathbb{T}: f_x(\phi_{j_1\cdots j_n}(x)) \notin G\} \subset \big\{ x\in \mathbb{T}: f_x(y) \notin G \text{ for some } y \in B_{\delta}(y_{j_1 \cdots j_n})\big\}.
\end{align*}
Then, we have
\begin{align}\label{proof1 eqn 1}
\bigcup_{j=1}^{b} \{x\in I_j: f_x(\phi_{j_1\cdots j_n}(x)) \notin G\} & \subset \big\{ x\in \mathbb{T}: f_x(y) \notin G \text{ for some } y \in B_{\delta}(y_{j_1, \cdots j_n})\big\}.
\end{align}
Thus, from equation (\ref{proof1 eqn 0}) and (\ref{proof1 eqn 1}), the set $I_{j_1\cdots j_n j}$ is bad only if 
\begin{align}\label{proof 1 eqn 2}
    I_j \cap \{x\in \mathbb{T}: f_x(y) \notin G \mbox{ for some } y\in B_{\delta}(y_{j_1\dots j_n})\} \neq \emptyset.
\end{align}

Each interval $I_j$ is of length $1/b$ and, from Lemma \ref{delta}, the set $\big\{x\in \mathbb{T}: f_x(y) \notin G \text{ for some } y \in B_{\delta}(y_{j_1\cdots j_n})\big\}$ can be covered by a set consisting of at most $s$ intervals and measure at most $\varepsilon'$. Thus, the number of intervals $I_j$ that satisfy equation (\ref{proof 1 eqn 2}) is bounded, and we have 
\begin{align*}
    \# \left\{1\leq j \leq b: I_{j_1\cdots j_nj} \mbox{ is bad } \right\} \leq 2(s+1)+[\varepsilon'b] = q.
\end{align*}
This proves that $q$ is the maximum number of bad sets in $\{I_{j_1\cdots j_n j}\}_{j=1}^{b}$. 

For any $f \in \mathcal{F}(\mathbb{T}^2)$, from Definition \ref{definition f}(b) and the chosen constant $l$ in Subsection \ref{counting lemma}, we have $\varepsilon < \tfrac{1}{6(l+1)}$. Thus, we can choose $\varepsilon'$ small enough such that $\varepsilon < \varepsilon'<\frac{1}{6(l+1)}$, and $b$ big enough, with respect to $s$ and $l$, such that 
\begin{align*}
    q= 2(s+1)+[\varepsilon'b] < \left[\frac{b}{4(l+1)}\right].
\end{align*}
This proves the lemma.
\end{proof}

\begin{lemma}\label{proof2}
For each fixed $y \in \mathbb{T}$ and for $a.e.$ $x \in \mathbb{T} $, we have $$L(x, y) < \frac{\log C}{2} < 0.$$
\end{lemma}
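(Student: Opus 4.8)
The plan is to run the chain Lemma \ref{proof1} $\Rightarrow$ Lemma \ref{prob2} $\Rightarrow$ Lemma \ref{counting}. Throughout, $y = y_0$ is the point fixed before Lemma \ref{proof1}, and we set $q := 2(s+1) + [\varepsilon' b]$. By Lemma \ref{proof1}, at most $q$ of the intervals in $\{I_j\}_{j=1}^b$ are bad, and for every $n \ge 1$ and every $j_1,\dots,j_n$ at most $q$ of $\{I_{j_1\cdots j_n j}\}_{j=1}^b$ are bad; moreover $b$ was chosen so that $q < [\,b/(4(l+1))\,]$, hence $q/b < 1/(4(l+1))$ and so $2q/b < 1/(2(l+1))$. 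If at some level strictly fewer than $q$ intervals are bad, we may freely declare extra intervals bad so that exactly $q$ are; this only shrinks the sets $M_n$ below, so the resulting full-measure set is contained in the one we actually want, and it lets us invoke Lemmas \ref{prob1} and \ref{prob2} verbatim. By Lemma \ref{prob2} the set $M = \liminf_n M_n$ — the set of $x$ lying in $M_n$ for all sufficiently large $n$ — has full measure.

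First I would translate membership in $M$ into a statement about the orbit $(x_j, y_j) = F^j(x, y_0)$. For each $x \in \mathbb{T}$ let $j_1(x), j_2(x),\dots$ be the unique sequence with $x \in I_{j_1(x)\cdots j_n(x)}$ for every $n$ (well-defined since each $\mathcal{P}_n$ partitions $\mathbb{T}$ and the $\mathcal{P}_n$ are nested as in (\ref{nested I_js})). If $y_m = \pi_2\circ F^m(x, y_0) \notin G$ for some $m \ge 1$, then by the definition of a bad interval in Subsection \ref{probability} the interval $I_{j_1(x)\cdots j_m(x)}$ is bad, since $x$ itself witnesses it. Hence, whenever $x \in M_n$,
$$\#\{1 \le m \le n : y_m \notin G\} \;\le\; \#\{1 \le m \le n : I_{j_1(x)\cdots j_m(x)} \text{ is bad}\} \;\le\; [\,2(q/b)n\,],$$
and therefore, allowing for the index $m = 0$,
$$\#\{0 \le j \le N - 1 : y_j \notin G\} \;\le\; [\,2(q/b)(N-1)\,] + 1$$
for every large $N$ (for such $N$ we have $x \in M_{N-1}$).

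Next I would feed this into Lemma \ref{counting}. Fix $x \in M$, and for large $N$ let $k_N$ be the number of $j \in \{0,\dots,N-1\}$ with $y_j \notin G$; put $p := k_N/N$, so that $[Np] = k_N$. Because $2q/b < 1/(2(l+1))$ strictly, the bound $k_N \le [\,2(q/b)(N-1)\,] + 1$ forces $k_N < N/(2(l+1))$, i.e. $p < 1/(2(l+1))$, for all large $N$. Lemma \ref{counting}, applied to $(x, y_0)$ and the integer $N$, then gives $\prod_{j=0}^{N-1}|\partial_y f(x_j, y_j)| < C^{N/2}$ for all large $N$. Taking logarithms, dividing by $N$, and passing to the limit superior,
$$L(x, y_0) = \limsup_{N\to\infty} \frac1N \sum_{j=0}^{N-1} \log|\partial_y f(x_j, y_j)| \;\le\; \frac{\log C}{2} \;<\; 0,$$
for every $x \in M$, i.e. for a.e. $x \in \mathbb{T}$; since $y = y_0$ was arbitrary, the lemma follows.

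The routine parts are the arithmetic with $q$, $l$ and the floor functions and the padding remark above. The content has already been packaged into the earlier lemmas; the only genuinely delicate point in the present step is that we use $x \in M_n$ for all large $n$ rather than merely infinitely often — otherwise the bound on the Birkhoff averages of $\log|\partial_y f|$ would hold only along a subsequence and would control $\liminf$ rather than the $\limsup$ that defines $L$. This is exactly why we take $M$ to be the full-measure set $\liminf_n M_n$ supplied by Lemma \ref{prob2}.
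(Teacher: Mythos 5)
Your proposal is correct and takes essentially the same route as the paper: Lemma \ref{proof1} bounds the number of bad intervals at each partition level, Lemma \ref{prob2} provides the full-measure set of $x$ for which at most $[2(q/b)N]$ of the nested intervals containing $x$ are bad for all large $N$, and Lemma \ref{counting} then converts this into the product bound and the Lyapunov estimate. Your additional precautions (padding so that exactly $q$ intervals are bad, the $m=0$ index, and insisting on membership in $M_n$ for all large $n$ rather than infinitely often) merely make explicit points the paper glosses over; note also that, exactly as in the paper's final line, the limsup computation really yields $L(x,y_0)\le \tfrac{\log C}{2}$ rather than a strict inequality.
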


\begin{proof}

For each $(x,y)\in \mathbb{T}^2$ and integer $n \geq 1$, there is a corresponding sequence of intervals, that contain $x$, in the partitions in $\{\mathcal{P}_j\}_{j=1}^{n}$ that are defined in Subsection \ref{probability}. Let us denote it by $A_n(x) = \{I_{j_1}, I_{j_1 j_2},\dots, I_{j_1\cdots j_n}\}$. By Lemma \ref{prob2}, for a.e. $x \in \mathbb{T}$, there exists $N_0\in \mathbb{N}$ such that for all $N\geq N_0$, 
\begin{equation*}
    \# \{I \in A_N(x): I \text{ is bad }\} \leq [2(q/b)N].
\end{equation*}
By our choice of $b$, we know that 
$$ Np := 2(q/b)N < \frac{N}{2(l+1)}.$$
Then, by Lemma \ref{counting}, we have 
$$\prod_{j=0}^{N-1} |\partial_{y} f(x_j, y_j)| < C^{N/2},$$
for all $n\geq N_0$. From the expression of Lyapunov exponent in equation (\ref{lyapunov}), we obtain 
\begin{align*}
L(x,y)  \leq \limsup_{N\rightarrow \infty} \ \frac{1}{N} \log (C^{N/2}) < \frac {\log (C)}{2},
\end{align*}
and this proves Theorem \ref{theorem}. Moreover, this captures additional information about the full measure set of points $(x,y) \in \mathbb{T}^2$ that have a negative Lyapunov exponent.
\end{proof}

%\limsup_{n\rightarrow \infty} \frac{1}{n} \log \|\prod_{i=0}^{n-1} \partial_{y}A(x_i)(y_i) \|  \\

%--------------------------------------------- ACKNOWLEDGEMENTS ------------------------------------

\section{Acknowledgements}
I would like to thank Kristian Bjerkl\"{o}v for his continued support and guidance. I am also grateful for the valuable insights and references provided by Reza Mohammadpour.

% -------------------------------------------- Disclosure ----------------------------------

\subsection*{Disclosure Statement}
The research conveyed in this paper was funded by KTH Royal Institute of Technology, supported by the Swedish Research Council (grant 2020-03989), and there is no financial or non-financial competing interests to report.

%-------------------------------------------- BIBLIOGRAPHY -------------------------------------------------------------

\bibliographystyle{acm}
\bibliography{bibliography} 

\end{document}